\documentclass[a4poaper,11pt,reqno, english]{amsart}  
\usepackage[utf8]{inputenc}
\usepackage[T1]{fontenc}
\usepackage{amsmath,amsthm}
\usepackage{amsfonts,amssymb,enumerate}
\usepackage{url,paralist}
\usepackage{mathtools}  
\usepackage[colorlinks=true,urlcolor=blue,linkcolor=red,citecolor=magenta]{hyperref}
\usepackage{enumerate}
\usepackage{subcaption}
\usepackage{tikz}
\usepackage{tikz-cd}
\usepackage[left=1in,right=1in,top=1in,bottom=1in]{geometry}
\usepackage{float}
 \pagestyle{plain}

\theoremstyle{plain}
\newtheorem{thm}{Theorem}[section]
\newtheorem{lem}[thm]{Lemma}
\newtheorem{cor}[thm]{Corollary}
\newtheorem{prop}[thm]{Proposition}

\newtheorem*{thm*}{Theorem}

\theoremstyle{definition}
\newtheorem{defn}[thm]{Definition}
\newtheorem{ex}[thm]{Example}
\newtheorem{rem}[thm]{Remark}

\newcommand{\Z}{\mathbb{Z}}
\newcommand{\R}{\mathbb{R}}

\linespread{1.2}

\begin{document}

\title[Topological zero-sum Ramsey]{Topological methods in zero-sum Ramsey theory}

\author[Frick]{Florian Frick}
\address[FF]{Dept.\ Math.\ Sciences, Carnegie Mellon University, Pittsburgh, PA 15213, USA}
\email{frick@cmu.edu}

\author[Lehmann Duke]{Jacob Lehmann Duke}
\address[JL]{Dept.\ Math.\ and Stat.\, Williamstown, MA 01267, USA}
\email{jl34@williams.edu} 

\author[McNamara]{Meenakshi McNamara}
\address[MM]{Dept.\ Math.\, West Lafayette, IN 47907, USA}
\email{mcnama@purdue.edu}

\author[Park-Kaufmann]{Hannah Park-Kaufmann}
\address[HP]{Dept.\ Math.\, Bard College, Annandale-on-Hudson, NY 12504, USA}
\email{hk9622@bard.edu} 

\author[Raanes]{Steven Raanes}
\address[SR]{Dept.\ Math.\ and Stat.\, Vassar College, Poughkeepsie, NY 12604, USA}
\email{sraanes@vassar.edu} 

\author[Simon]{Steven Simon}
\address[SS]{Dept.\ Math.\, Bard College, Annandale-on-Hudson, NY 12504, USA}
\email{ssimon@bard.edu} 

\author[Thornburgh]{Darrion Thornburgh}
\address[DT]{Dept.\ Math.\, Bard College, Annandale-on-Hudson, NY 12504, USA}
\email{dt9275@bard.edu} 

\author[Wellner]{Zoe Wellner}
\address[ZW]{Dept.\ Math.\ Sciences, Carnegie Mellon University, Pittsburgh, PA 15213, USA}
\email{zwellner@cmu.edu} 

\thanks{This project was done as part of the 2023 REU: Geometry and Topology in a Discrete Setting at Carnegie Mellon University, funded by NSF CAREER Grant DMS 2042428 and NSF Grant DMS 1952285. The authors gratefully acknowledge the support of Alan Frieze. FF was supported by NSF CAREER Grant DMS 2042428. HP was supported by the Bard College Office of Undergraduate Research.}

\date{\today}

\begin{abstract}

A cornerstone result of Erd\H os, Ginzburg, and Ziv (EGZ) states that any sequence of $2n-1$ elements in $\Z/n$ contains a zero-sum subsequence of length $n$. While algebraic techniques have predominated in deriving many deep generalizations of this theorem over the past sixty years, here we introduce topological approaches to zero-sum problems which have proven fruitful in other combinatorial contexts. Our main result (1) is a topological criterion for determining when any $\Z/n$-coloring of an $n$-uniform hypergraph contains a zero-sum hyperedge. In addition to applications for Kneser hypergraphs, for complete hypergraphs our methods recover Olson's generalization of the EGZ theorem for arbitrary finite groups. Furthermore, we (2) give a fractional generalization of the EGZ theorem with applications to balanced set families and (3) provide a constrained EGZ theorem which imposes combinatorial restrictions on zero-sum sequences in the original result. 
\end{abstract}

\maketitle

\section{Introduction and Statement of Results}
\label{Introduction} 

The classical Erd\H os--Ginzburg--Ziv (EGZ) theorem~\cite{EGZ61} states that any sequence $a_1, \dots, a_{2n-1}$ of $2n-1$ elements in $\Z/n$ contains a subsequence $a_{i_1}, \dots, a_{i_n}$ with $\sum_j a_{i_j} = 0$. Over the last sixty years, this result has inspired numerous generalizations and variants, collectively known as zero-sum Ramsey theory, a general viewpoint that seems to originate from a paper of Bialostocki and Dierker~\cite{BD90}; see~\cite{Car96,GG06} for surveys. Algebraic techniques such as the Cauchy--Davenport and Chevalley--Warning theorems have proven to be particularly fruitful in deriving results of this type. Multiple different proofs of the original EGZ theorem are known (see e.g.,~\cite{AD93}); these typically proceed by first establishing the result for $n$ a prime, from which the general case follows by a simple induction on prime divisors. 

Here we introduce equivariant topological methods to the study of zero-sum Ramsey problems. Such techniques have proven to be quite powerful in other combinatorial contexts, for instance in establishing the chromatic numbers of Kneser graphs~\cite{Lov78} and hypergraphs~\cite{AFL86, Kri92}. A brief review of the necessary concepts from this well-established approach is given in Section~\ref{sec:background}. We remark that~\cite{Za20} presents an unrelated geometric approach to zero-sum Ramsey results and~\cite{KP12} develops topological methods for a problem in arithmetic combinatorics. 
In particular, we observe that known results already show that the original EGZ theorem can be seen as a  consequence of the colorful Carath\'eodory theorem~\cite{Bar82}, a central result in discrete geometry, since the latter implies a result of Drisko~\cite{Dri98} on rainbow matchings in bipartite graphs from which~\cite{AKZ18} the EGZ theorem quickly follows.

In what follows, we give three topological proofs of the Erd\H os--Ginzburg--Ziv theorem  that generalize in three distinct directions.  In particular, when $n=p$ is prime we 

\begin{compactenum}[(i)]
    \item establish a topological criterion for when any $\Z/p$-coloring of a $p$-uniform hypergraph admits a hyperedge whose labels sum to zero, by which the Erd\H os--Ginzburg--Ziv theorem is recovered in the case of a complete hypergraph; 
    \item provide a fractional generalization of the Erd\H os--Ginzburg--Ziv theorem; and
    \item prove a constrained version of the Erd\H os--Ginzburg--Ziv theorem which imposes combinatorial restrictions on zero-sum sequences in the original result. 
\end{compactenum}

We now precisely state these generalizations and collect some consequences.

\subsection{Hypergraph Coloring Generalizations of EGZ}

Let $H$ be an $n$-uniform hypergraph with vertex set~$V$, i.e., a collection of $n$-element subsets of~$V$. A \textbf{$\Z/n$-coloring} of~$H$ is a map $c\colon V \to \Z/n$, and a hyperedge $e \in H$ is said to be \textbf{zero-sum} with respect to~$c$ provided $\sum_{v \in e} c(v) = 0$. Thus an equivalent formulation of the EGZ theorem is that any $\Z/n$-coloring of the complete $n$-uniform hypergraph on $[2n-1]=\{1,2,\dots, 2n-1\}$ has a zero-sum hyperedge. 

When $n=p$ is prime, our topological criterion for when a given hypergraph~$H$ has a zero-sum hyperedge for any $\Z/p$-coloring is stated in terms of continuous maps $f\colon B(H)\rightarrow S^{2p-3}$ from the \textbf{box complex}~$B(H)$ of~$H$ to a $(2p-3)$-dimensional sphere. Box complexes have been instrumental in establishing lower bounds for the chromatic number~$\chi(H)$ of uniform hypergraphs. (Recall that $\chi(H)$ is the minimum number of colors needed to color the vertices of $H$ so that no hyperedge is monochromatic.) The complex $B(H)$ is a simplicial complex (that is, a downward-closed set system) on vertex set $V \times \Z/p$, whereby a subset $\sigma=\cup_{i\in \Z/p} (A_i\times \{i\})$ of $V\times \Z/p$ lies in $B(H)$ provided $\{a_0, \dots, a_{p-1}\} \in H$ for all $a_0 \in A_0, \dots, a_{p-1} \in A_{p-1}$. Thus for non-empty $A_i\subset V$ we have that $\sigma\in B(H)$ if and only if the $A_i$ are pairwise disjoint and $H$ contains the complete $p$-partite hypergraph determined by the $A_i$. As with any simplicial complex, one may think of $B(H)$ as a topological space glued from simplices, which in this case carries a free $\Z/p$-action that cyclically shifts the $\Z/p$-factor.

Denote the $d$-dimensional sphere by~$S^d$. Any $p$-th root of unity determines a $\Z/p$-action on $\mathbb C \cong \R^2$ given by multiplication. Considering all non-trivial roots of unity, one thereby has a free $\Z/p$-action on $S^{2p-3} \subset \R^{2p-2}$ given by the diagonal action on $\R^{2p-2} \cong (\R^2)^{p-1}$. We recall that a continuous map $f\colon X \to Y$ between two spaces $X$ and $Y$ equipped with a $\Z/p$-action is \emph{equivariant} if it commutes with the action.

 We may now state our first main result (see Section~\ref{sec:box-complex} for the proof):

\begin{thm}
\label{thm:box-complex}
    Let $p \ge 2$ be a prime, and let $H$ be a $p$-uniform hypergraph. If there is no $\Z/p$-equivariant map $B(H) \to S^{2p-3}$, then for any $\Z/p$-coloring of~$H$ there is a zero-sum hyperedge in~$H$. 
\end{thm}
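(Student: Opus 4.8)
The plan is to prove the contrapositive: if $c\colon V\to\Z/p$ is a $\Z/p$-coloring of $H$ with no zero-sum hyperedge, I will construct a $\Z/p$-equivariant map $B(H)\to S^{2p-3}$. The idea is not to map out of $B(H)$ directly, but to first collapse it onto a far lower-dimensional ``colored'' box complex using $c$, and then invoke equivariant obstruction theory.

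Concretely, I would consider the $\Z/p$-equivariant simplicial map $\pi\colon B(H)\to B_c(H)$ defined on vertices by $\pi(v,i)=(c(v),i)$, where $B_c(H)$ is the image subcomplex of the full simplex on vertex set $\Z/p\times\Z/p$ (with $\Z/p$ shifting the second coordinate). The key point -- and the only place the hypothesis is used -- is that $\dim B_c(H)\le 2p-3$. Recall that every simplex of $B(H)$ is contained in one of the form $\bigcup_i(A_i\times\{i\})$ with the $A_i$ nonempty, pairwise disjoint, and spanning a complete $p$-partite subhypergraph of $H$, so every simplex of $B_c(H)$ is a face of some $\pi\big(\bigcup_i(A_i\times\{i\})\big)=\bigcup_i\big(c(A_i)\times\{i\}\big)$ of this form. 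For such a simplex every transversal $\{a_0,\dots,a_{p-1}\}$ with $a_i\in A_i$ is a hyperedge of $H$, so the absence of zero-sum hyperedges means $0\notin C_0+C_1+\cdots+C_{p-1}$ in $\Z/p$, where $C_i=c(A_i)$. Since $p$ is prime, the iterated Cauchy--Davenport inequality gives $|C_0+\cdots+C_{p-1}|\ge\min\{p,\ \sum_i|C_i|-(p-1)\}$; as the sumset omits $0$ it has at most $p-1$ elements, forcing $\sum_i|C_i|\le 2p-2$. Hence $\bigcup_i(C_i\times\{i\})$ -- and therefore every simplex of $B_c(H)$ -- has at most $2p-2$ vertices, i.e.\ $\dim B_c(H)\le 2p-3$.

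Next I would check that $\Z/p$ acts freely on $B_c(H)$: a fixed simplex must be of the form $C\times\Z/p$, which if $|C|\ge 2$ has at least $2p$ vertices, contradicting the dimension bound, and if $C=\{\gamma\}$ has a preimage $\bigcup_i(A_i\times\{i\})$ with every $A_i$ monochromatic of color $\gamma$, making any transversal a hyperedge with label sum $p\gamma=0$ -- a contradiction. So only the empty simplex is fixed. Now $S^{2p-3}$ with the given diagonal action of $p$-th roots of unity is a free $\Z/p$-space that is $(2p-4)$-connected, and it is a standard fact that every free $\Z/p$-CW complex of dimension at most $2p-3$ admits a $\Z/p$-equivariant map to it: the obstructions to a cell-by-cell construction lie in groups $H^n_{\Z/p}\big(B_c(H);\pi_{n-1}(S^{2p-3})\big)$, all of which vanish since $\pi_{n-1}(S^{2p-3})=0$ for $n\le 2p-3$ while $B_c(H)$ has dimension $\le 2p-3$. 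Precomposing with $\pi$ yields the desired map $B(H)\to S^{2p-3}$.

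I expect the substantive obstacle to be precisely the recognition that one should factor through $B_c(H)$ and that the sumset form of ``no zero-sum hyperedge'' is exactly the input Cauchy--Davenport needs to cap its dimension at $2p-3$; once that is seen, the remainder is formal equivariant topology. As a consistency check, applying the result to the complete $p$-uniform hypergraph on $2p-1$ vertices -- whose box complex is homotopy equivalent to a wedge of $(2p-2)$-spheres, hence $(2p-3)$-connected and freely acted on, so by Dold's theorem admits no equivariant map to $S^{2p-3}$ -- recovers the Erd\H os--Ginzburg--Ziv theorem in the prime case, as promised in the introduction.
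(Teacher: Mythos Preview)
Your overall strategy---factor $B(H)$ through a collapsed complex on $\Z/p\times\Z/p$ via $(v,i)\mapsto(c(v),i)$ and then map that complex equivariantly to $S^{2p-3}$---is essentially the paper's own route (up to a coordinate change, the paper writes $(v,g)\mapsto(g,g+c(v))$). The difference is in how the second step is carried out, and your version of it has a genuine gap.

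The claim $\dim B_c(H)\le 2p-3$ is false. Your Cauchy--Davenport argument correctly bounds faces $\bigcup_i(A_i\times\{i\})$ of $B(H)$ with \emph{all} $A_i$ nonempty, but these do not exhaust $B(H)$, and your assertion that every simplex of $B(H)$ is contained in one of that form is incorrect. By the paper's definition, $\sigma=\bigcup_i(A_i\times\{i\})$ lies in $B(H)$ whenever every transversal is a hyperedge; if some $A_j=\emptyset$ there are no transversals and the condition is vacuous, so $B(H)$ contains every $\bigcup_{i\in I}(A_i\times\{i\})$ with $I\subsetneq\Z/p$ and arbitrary $A_i\subseteq V$. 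For a concrete failure take $p=3$, $H$ with no hyperedges (hence trivially no zero-sum ones), and $c$ surjective: then $V\times\{0,1\}\in B(H)$ and its image $\Z/3\times\{0,1\}$ is a $5$-simplex in $B_c(H)$, while $2p-3=3$. In general these ``partial'' faces give $\dim B_c(H)$ as large as $p(p-1)-1$, so the obstruction-theory step cannot be invoked from a dimension bound. (Your freeness claim is true, but the case $|C|\ge 2$ needs a direct argument rather than the dimension bound.) The paper handles the second step not by bounding dimension but by exhibiting an explicit never-vanishing linear equivariant map from the intermediate complex to $\mathbb{C}^\perp[\Z/p]$, using the Birkhoff--von~Neumann theorem (Lemma~\ref{lem:stochastic}) to show the barycenter is missed.
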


The box complex of the complete $p$-uniform hypergraph on $2p-1$ vertices is equivariantly isomorphic to the $(2p-1)$-fold join of $\Z/p$ with itself, and so by an elementary Borsuk--Ulam type theorem due to Dold ~\cite{Dol83} does not admit a $\Z/p$-equivariant map to $S^{2p-3}$. Thus Theorem~\ref{thm:box-complex} recovers the Erd\H os--Ginzburg--Ziv theorem when $p$ is prime. In fact, if one replaces continuous maps with \emph{linear} ones, then the box complex construction allows for the consideration of colorings of $n$-uniform hypergraphs by any group of order $n$ (see Theorem~\ref{thm:linear box-complex} and Remark~\ref{rem:cyclic} below). Restricting to complete uniform hypergraphs and applying the ``linear Borsuk--Ulam theorem'' of Sarkaria~\cite{Sar00} gives the following extension of the EGZ theorem to arbitrary finite groups due to Olson~\cite{Ols76} as an immediate consequence (see Section~\ref{sec:box-complex} for the proof): 

\begin{thm}
\label{thm:Olson}
Let  $a_1,\ldots, a_{2n-1}$ be a sequence of elements of a group $G$ of order $n$. Then there are $n$ distinct indices $i_1,\ldots, i_n$ such that $a_{i_1}\cdots a_{i_n}=1$, the group identity. 
\end{thm}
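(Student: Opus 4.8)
The plan is to deduce Theorem~\ref{thm:Olson} from the linear refinement of our hypergraph criterion, in exact parallel with the way Theorem~\ref{thm:box-complex} yields the EGZ theorem when the uniformity is prime. First I would reformulate the target. Apply Theorem~\ref{thm:linear box-complex} --- the variant of Theorem~\ref{thm:box-complex} with \emph{linear} equivariant maps which, by Remark~\ref{rem:cyclic}, is available for colorings by an arbitrary group $G$ of order $n$ --- to the complete $n$-uniform hypergraph $H = K_{2n-1}^{(n)}$ on the vertex set $[2n-1]$. A $G$-coloring of $H$ is precisely a sequence $a_1,\dots,a_{2n-1}$ of elements of $G$, via $a_i = c(i)$, and a zero-sum hyperedge of $H$ is precisely a choice of $n$ distinct indices whose $G$-colors multiply to the group identity. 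So it suffices to verify the hypothesis of Theorem~\ref{thm:linear box-complex}: that there is no $G$-equivariant linear map from the box complex of $H$ to the relevant sphere.

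Next I would identify that box complex. Because $H$ is complete, the defining condition degenerates exactly as it does for $K_{2p-1}^{(p)}$ in the discussion following Theorem~\ref{thm:box-complex}: a subset $\sigma = \bigcup_{g \in G}\bigl(A_g \times \{g\}\bigr)$ of $V \times G$ lies in the box complex if and only if the sets $A_g$ are pairwise disjoint. This is the description of a simplex of the $(2n-1)$-fold join $G^{*(2n-1)}$, where $G$ is taken as a discrete free $G$-set and the join carries the diagonal left-translation action; hence the box complex of $K_{2n-1}^{(n)}$ is $G$-equivariantly isomorphic to $G^{*(2n-1)}$. As a join of $2n-1$ nonempty discrete sets, $G^{*(2n-1)}$ is a free $G$-complex of dimension $2n-2$ that is $(2n-3)$-connected.

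It then remains to rule out a $G$-equivariant linear map $G^{*(2n-1)} \to S^{2n-3}$, where $S^{2n-3}$ is taken to be the unit sphere of the $(2n-2)$-dimensional $G$-representation $\widetilde{\mathbb{R}[G]} \oplus \widetilde{\mathbb{R}[G]}$, with $\widetilde{\mathbb{R}[G]} = \mathbb{R}[G]/\mathbb{R}$ the reduced regular representation (for $G = \Z/p$ this is exactly the action on $(\R^2)^{p-1}$ appearing in Theorem~\ref{thm:box-complex}). Here I would invoke Sarkaria's linear Borsuk--Ulam theorem~\cite{Sar00}, which forbids such an equivariant linear map because the number $2n-1$ of join factors exceeds by one the dimension $2n-2$ of the representation $\widetilde{\mathbb{R}[G]}^{\oplus 2}$. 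The crucial point is that one cannot substitute Dold's theorem at this stage: $G$ fails to act freely on $S^{2n-3}$ unless $G$ is cyclic of prime order, since any $g \in G$ of order strictly less than $n$ fixes a nonzero vector of $\widetilde{\mathbb{R}[G]}$; the restriction to \emph{linear} maps is precisely what lets Sarkaria's argument proceed in the absence of freeness. Feeding this non-existence back into Theorem~\ref{thm:linear box-complex} produces the zero-sum hyperedge, i.e.\ the $n$ indices asserted in Theorem~\ref{thm:Olson}.

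I expect the main obstacle to be bookkeeping rather than topology: one must check that the ``zero-sum hyperedge'' delivered by Theorem~\ref{thm:linear box-complex} does translate into a product of the corresponding group elements equal to the identity in the sense claimed by Olson. For abelian $G$ this is automatic, while for general $G$ it depends on how a zero-sum hyperedge is defined in the nonabelian setting (for instance, relative to a fixed linear order on $V$), so the passage from hypergraph colorings to ordered group products is the step I would treat most carefully. A secondary point is to confirm that the precise hypotheses of Sarkaria's theorem --- linear versus affine maps, and the requirement that the target representation have no trivial summand --- are indeed met by $\widetilde{\mathbb{R}[G]}^{\oplus 2}$ and the $(2n-1)$-fold join, so that $2n-1 > \dim_{\mathbb{R}}\bigl(\widetilde{\mathbb{R}[G]}^{\oplus 2}\bigr)$ is the inequality that applies.
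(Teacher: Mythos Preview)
Your proposal is correct and follows essentially the same approach as the paper: apply Theorem~\ref{thm:linear box-complex} to the complete $n$-uniform hypergraph on $[2n-1]$, identify its box complex with $(\Delta_{2n-2})^{*n}_\Delta \cong G^{*(2n-1)}$, and invoke Sarkaria's linear Borsuk--Ulam theorem (Theorem~\ref{thm:Sarkaria}) for the fixed-point-free representation $\mathbb{C}^\perp[G]$ (your $\widetilde{\R[G]}^{\oplus 2}$) to conclude that every linear equivariant map has a zero. Your anticipated bookkeeping concern about nonabelian zero-sums is handled in the paper exactly as you suggest: a hyperedge is declared zero-sum if \emph{some} ordering of its vertices multiplies to the identity (see the paragraph opening Section~\ref{sec:box-complex}), so the translation to Olson's statement is immediate; the only slight imprecision in your write-up is speaking of a ``linear map to $S^{2n-3}$'' rather than a linear map to the representation that avoids the origin.
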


 It follows from a result of Kr\'i\v z that any hypergraph satisfying the criterion of Theorem~\ref{thm:box-complex} must have chromatic number at least three~\cite{Kri92}. Such a lower bound for the chromatic number does not in itself guarantee a zero-sum hyperedge; see Remark~\ref{rem:chromatic} for a simple counterexample. 
Nonetheless, in the special case of Kneser hypergraphs, one can use Theorem~\ref{thm:box-complex} to give a purely combinatorial criterion for the existence of zero-sum hyperedges. Recall that for a set family $\mathcal F$ on ground set $[m] = \{1,2,\dots,m\}$, the \textbf{$n$-uniform Kneser hypergraph} $\mathrm{KG}^n(\mathcal F)$ has $\mathcal F$ as its vertex set and $A_1, \dots, A_n \in \mathcal F$ form a hyperedge if the $A_i$ are pairwise disjoint. The \textbf{$n$-colorability defect} $\mathrm{cd}^n(\mathcal F)$ of $\mathcal F$, introduced by Dolnikov for $n=2$ and in general by Kr\'i\v z, is defined by 
 \[
    \mathrm{cd}^n(\mathcal F) = m -\max\left\{\sum_{i=1}^n |A_i| \mid A_1, \dots, A_n \subset [m] \ \text{pairwise disjoint and} \ F \not\subset A_i \ \forall F \in \mathcal F \ \text{and} \ \forall i \in [n]\right\}.
\]

Kr\' i\v z proved the fundamental inequality $(n-1)\chi(\mathrm{KG}^n(\mathcal F))\geq \mathrm{cd}^n(\mathcal F)$ relating the $n$-colorability defect and the chromatic number of the Kneser hypergraph, while for $\Z/n$-colorings Theorem~\ref{thm:box-complex} will imply the following (see Section~\ref{sec:cd} for the proof):

\begin{thm}
\label{thm:cd}
    Let $n \ge 2$ be an integer, and let $\mathcal F$ be a set system with $\mathrm{cd}^n(\mathcal F) \ge 2n-1$. Then any $\Z/n$-coloring of $\mathrm{KG}^n(\mathcal F)$ has a zero-sum hyperedge. 
\end{thm}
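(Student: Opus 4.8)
The plan is to deduce Theorem~\ref{thm:cd} from Theorem~\ref{thm:box-complex} by understanding the box complex $B(\mathrm{KG}^n(\mathcal F))$ topologically in terms of the colorability defect. Since Theorem~\ref{thm:box-complex} is stated only for prime $n=p$, I expect the argument to first establish the prime case and then pass to general $n$ by the standard induction on prime divisors that underlies all proofs of EGZ-type statements: if the result holds for $n=p$ prime and for $n=m$, one colors $\mathrm{KG}^{pm}(\mathcal F)$, groups the $pm$ pairwise disjoint sets of a hyperedge into $p$ blocks of $m$ using the $\Z/m$-result on an auxiliary Kneser hypergraph whose vertices are $m$-fold disjoint unions, checks that the colorability defect only goes up under this operation (so that $\mathrm{cd}^p \ge 2p-1$ is inherited), and then applies the $\Z/p$-case to the block-sums. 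The bookkeeping here — verifying that the derived set system still has colorability defect at least $2p-1$ (or the appropriate bound) — is the first place where care is needed.

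For the prime case, the key step is the classical fact, used throughout the topological Kneser literature, that there is a $\Z/p$-equivariant map from a sphere of dimension $\mathrm{cd}^p(\mathcal F) - 2$ (with the free orthogonal $\Z/p$-action) into $B(\mathrm{KG}^p(\mathcal F))$. Concretely, one identifies the $(\mathrm{cd}^p(\mathcal F)-1)$-dimensional sphere with the join $(\Z/p)^{*\mathrm{cd}^p(\mathcal F)}$, or rather one uses that after discarding the coordinates in the maximal ``bad'' configuration $A_1,\dots,A_p$ witnessing the colorability defect, the remaining $\mathrm{cd}^p(\mathcal F)$ ground-set elements $j$ each give, via the $p$ choices of which block to place $j$ in, a copy of $\Z/p$, and any point of the corresponding join determines pairwise disjoint subsets $B_0,\dots,B_{p-1}$ of those elements which — precisely because $F\not\subset A_i$ for every $F\in\mathcal F$ — can be completed so that the $p$-partite hypergraph they span lies in $\mathrm{KG}^p(\mathcal F)$; this is exactly the condition for a simplex of the box complex. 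One then checks this assignment is simplicial (or can be made continuous) and $\Z/p$-equivariant for the cyclic shift. The main obstacle is getting this map exactly right: making sure the vertices $B_i$ are genuinely nonempty where needed, that the disjointness and ``contains a complete $p$-partite subhypergraph'' conditions defining $B(H)$ are met, and that the $\Z/p$-action on the join matches the cyclic shift on $V\times\Z/p$ — this is the heart of the argument and where a sign or indexing slip would break equivariance.

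Granting that map, the rest is a short chain. The hypothesis $\mathrm{cd}^p(\mathcal F)\ge 2p-1$ gives a $\Z/p$-equivariant map $S^{2p-3}\hookrightarrow B(\mathrm{KG}^p(\mathcal F))$ (since $\mathrm{cd}^p - 2 \ge 2p-3$, and a sphere maps equivariantly into any higher-dimensional sphere, hence we get at least $S^{2p-3}$). Suppose for contradiction there were a $\Z/p$-equivariant map $B(\mathrm{KG}^p(\mathcal F))\to S^{2p-3}$; composing with the inclusion yields a $\Z/p$-equivariant self-map $S^{2p-3}\to S^{2p-3}$, which certainly exists, so that alone is not yet a contradiction — instead one must arrange the target to be a sphere of dimension strictly below $2p-3$, i.e. one should push $\mathrm{cd}^p$ into the count more tightly, or invoke Dold's theorem in the form that there is no $\Z/p$-map from a $(2p-2)$-connected-enough space, equivalently from $(\Z/p)^{*(2p-1)}\simeq S^{2p-3}$... . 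So the clean statement is: $\mathrm{cd}^p(\mathcal F)\ge 2p-1$ forces the non-existence of a $\Z/p$-equivariant map $B(\mathrm{KG}^p(\mathcal F))\to S^{2p-3}$ by Dold's theorem applied to the composite $S^{\mathrm{cd}^p-2}\to B(\mathrm{KG}^p(\mathcal F))\to S^{2p-3}$ with $\mathrm{cd}^p - 2 \ge 2p-3$, together with the observation that Dold's theorem rules out equivariant maps that decrease (or keep equal with the appropriate connectivity) dimension in exactly this range; then Theorem~\ref{thm:box-complex} delivers the zero-sum hyperedge. I would double-check the exact dimension count and the precise form of Dold's theorem being cited — that off-by-one is the last place the argument could slip.
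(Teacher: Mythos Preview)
Your overall architecture---prime case via the box-complex obstruction of Theorem~\ref{thm:box-complex}, then induction on prime divisors for composite~$n$---matches the paper's, and the induction is carried out there following Kr\'i\v z (via an auxiliary system $\Gamma=\{E:\mathrm{cd}^q(\mathcal F|_E)\ge 2q-1\}$ rather than the ``$m$-fold disjoint unions'' you describe, but the idea is the same). The prime case, however, is handled quite differently, and your sketch of it has a genuine gap at its center.

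You try to map a $(2p-3)$-connected free $\Z/p$-space \emph{into} $B(\mathrm{KG}^p(\mathcal F))$ and then invoke Dold. The paper never does this. It instead proves Lemma~\ref{lem:cd-box-complex} by a Tverberg-type construction: with $N=(p-1)(d+1)+m$ and $\Sigma$ the complex of subsets of~$[N]$ \emph{not} in~$\mathcal F$, it builds an equivariant simplicial map $\mathrm{sd}((\Delta_{N-1})^{*p}_\Delta)\to \mathrm{sd}(\Sigma^{*p}_\Delta)*B(\mathrm{KG}^p(\mathcal F))$ by sending a $p$-tuple of disjoint sets to the $\Sigma$-factor if none lie in~$\mathcal F$ and to the box-complex factor otherwise; a strong-general-position argument (this is where $\mathrm{cd}^p$ enters, as a dimension bound on $\Sigma^{*p}_\Delta$) sends $\Sigma^{*p}_\Delta$ equivariantly to $S^{(p-1)(d+1)-1}$, and joining with a hypothetical map $B(\mathrm{KG}^p(\mathcal F))\to S^{2p-3}$ then contradicts Theorem~\ref{thm:topological-delted}. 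Your map, by contrast, is not well-defined as written: the vertices of $B(\mathrm{KG}^p(\mathcal F))$ lie in $\mathcal F\times\Z/p$, not in $[m]\times\Z/p$, so assigning a leftover ground-set element $j$ to block $i$ does not name a vertex, and there is no canonical simplicial, equivariant way to ``complete'' a partial partition $(B_0,\dots,B_{p-1})$ of ground-set elements to members of~$\mathcal F$. The paper explicitly remarks that the Kr\'i\v z-style construction you are gesturing at lands in a different box complex, and chose the Tverberg route precisely to avoid adapting it. Your closing dimension worry is a separate symptom: $(\Z/p)^{*(2p-1)}$ is not $S^{2p-3}$ for $p>2$ but is $(2p-3)$-connected and $(2p-2)$-dimensional, and it is the connectivity (not any sphere identification) that Dold needs---so that final step would go through if the map into the box complex existed, but as you describe it, it does not.
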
 

As a special case of Theorem~\ref{thm:cd}, let $k\geq 1$ be an integer and let $m$ be an integer satisfying $m\geq n(k+1)-1$. Considering the set system $\mathcal F$ consisting of all $k$-element subsets of~$[m]$, it is easily seen that $\mathrm{cd}^n(\mathcal F) = m - n(k-1) \ge 2n-1$. Thus Theorem~$\ref{thm:cd}$ applied to $\mathrm{KG}^n(\mathcal F)$ recovers the fact (see~\cite{BD92, Car96})
that $\mathcal F$ contains a zero-sum \emph{matching} of size~$n$ for any $\Z/n$-coloring of the hyperedges of $\mathcal F$. In particular, the Erd\H os--Ginzburg--Ziv theorem is again recovered by letting $k=1$.

\subsection{Fractional and Constrained Extensions of EGZ}

Our remaining results have for their starting point the characterization of zero-sum sequences in $\Z/n$ originally due to Marshall Hall~\cite{Hal52}:

\begin{thm}
\label{thm:Hall}
A sequence $a_1, \dots, a_n\in \Z/n$ is zero-sum if and only if there are permutations $\{b_1, \dots, b_n\}$ and $\{c_1,\ldots, c_n\}$ of $\Z/n$ with $a_i=b_i-c_i$ for all $i\in [n]$. 
\end{thm}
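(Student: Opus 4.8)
The plan is to prove both directions directly, with the forward direction being essentially trivial and the reverse direction—the existence of the two permutations—being the substantive content. First, for the easy direction: if $a_i = b_i - c_i$ where $\{b_1,\dots,b_n\}$ and $\{c_1,\dots,c_n\}$ each run over all of $\Z/n$, then $\sum_i a_i = \sum_i b_i - \sum_i c_i = \left(\sum_{x\in\Z/n} x\right) - \left(\sum_{x\in\Z/n} x\right) = 0$, so the sequence is zero-sum. This requires no hypothesis beyond the permutation condition and works over $\Z/n$ for any $n$.

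For the reverse direction, suppose $\sum_{i=1}^n a_i = 0$ in $\Z/n$. I would set $c_i = i$ for $i \in \{0,1,\dots,n-1\}$ (identifying the index set $[n]$ with $\Z/n$), so that $\{c_1,\dots,c_n\}$ is automatically a permutation of $\Z/n$, and define $b_i = a_i + c_i = a_i + i$. Then by construction $a_i = b_i - c_i$, so the only thing to verify is that $b_0, b_1, \dots, b_{n-1}$ is a permutation of $\Z/n$, i.e., that the $b_i$ are pairwise distinct. This is where the zero-sum hypothesis must enter, and it is the crux of the argument. The natural approach is to reorder the terms: I would show that one can permute the sequence $(a_i)$ so that the partial sums $s_k = a_1 + \cdots + a_k$ (with $s_0 = 0$) are pairwise distinct modulo $n$—equivalently, that $s_0, s_1, \dots, s_{n-1}$ hit all $n$ residues (note $s_n = 0 = s_0$ by the zero-sum condition, so we only get $n$ distinct values among $s_0,\dots,s_{n-1}$, which is exactly what we want). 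Given such an ordering, setting $b_i = s_i$ and $c_i = s_i - a_i = s_{i-1}$ (suitably indexed) exhibits both as permutations.

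The main obstacle is proving that the terms can be reordered to make the partial sums distinct. The standard tool here is a greedy/exchange argument: process the residue classes of $\Z/n$ in some order and at each step, if two partial sums would collide, swap a later term into position to break the tie—this is the classical lemma sometimes attributed to the "partial sums" or "rearrangement" technique underlying EGZ-type results. Concretely, I would argue by strong induction on $n$: group the multiset $\{a_1,\dots,a_n\}$ by value; if some value $a$ occurs, pull out a maximal run and handle it, or alternatively use the fact that a zero-sum sequence of length $n$ over $\Z/n$ always contains a proper nonempty zero-sum consecutive block after suitable reordering (for instance, if some $a_i = 0$, recurse on the remaining $n-1$ terms over... but the modulus changes, so care is needed). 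A cleaner route: if all $a_i$ are equal to some fixed $a$, then $na = 0$ forces nothing special, but $a_i = a$ for all $i$ gives partial sums $0, a, 2a, \dots, (n-1)a$, which are distinct iff $\gcd(a,n)=1$; since $na=0$ automatically and we need $\sum a_i = na = 0$ which holds always—so this constant case needs the general reordering after all when $\gcd(a,n)>1$, but then not all terms can equal $a$ unless... Here one sees the delicacy, and I would resolve it by the exchange argument rather than naive induction: order greedily, maintaining distinctness of partial sums, and show a blocking configuration cannot persist because the zero-sum condition guarantees the "missing" residue is always reachable by swapping in an unused term. I expect writing this exchange argument carefully—specifying exactly which swap to perform and why it terminates—to be the one genuinely technical step.
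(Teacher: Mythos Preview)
Your forward direction is correct. The reverse direction, however, has a genuine gap: the central claim that a zero-sum sequence $a_1,\dots,a_n$ in $\Z/n$ can always be \emph{reordered so that the partial sums $s_0,\dots,s_{n-1}$ are pairwise distinct} is false. Observe that if any $a_i=0$ then, wherever it is placed (say in position $k$), one has $s_k=s_{k-1}$, so distinctness is impossible. More dramatically, take $n=4$ and $a_1=a_2=a_3=a_4=2$: this is zero-sum, yet every ordering gives partial sums $0,2,0,2$. Hall's theorem still holds here (take $c=(0,1,2,3)$, $b=(2,3,0,1)$), so the ``single chain of partial sums'' construction is strictly stronger than what the theorem asserts and cannot be the mechanism of proof. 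You actually brush against this in your discussion of the constant case $a_i\equiv a$ with $\gcd(a,n)>1$, but the sentence trails off precisely where the argument breaks; no amount of ``swapping'' can repair it, because the obstruction is intrinsic to the multiset.

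What is really going on is that the permutations $b,c$ determine a functional digraph on $\Z/n$ (edges $c_i\to b_i$) that is a disjoint union of cycles, not necessarily a single $n$-cycle. A correct combinatorial proof must allow the sequence to be partitioned into several zero-sum blocks, each traversed with its own starting offset, so that the concatenated partial sums cover $\Z/n$ without repetition. Your proposal collapses this to the single-cycle case and so cannot succeed as written.

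For comparison, the paper takes an entirely different route and proves the result only for $n=p$ prime. It works topologically: it shows (Lemma~\ref{lem:degree}) via equivariant degree theory that any $\Z/p$-equivariant map $\Delta_{p-1,p}\to\partial\Delta_{p-1}$ is surjective, then builds an auxiliary $\Z/p$-complex $Y'\supset\Delta_{p,p}$ encoding exactly the permutation graphs with the correct sum, and uses the surjectivity to force a maximal face of $\Delta_{p,p}$ (i.e., a genuine permutation) to map onto $\Delta_{p-1}$. There is no partial-sum reordering at all.
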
  

We will give a topological proof of this fact when $n$ is prime; see the proof of Lemma~\ref{lem:degree} below. Thus an equivalent reformulation of the Erd\H os--Ginzburg--Ziv theorem is that any sequence in $\Z/n$ of length $2n-1$ contains a subsequence of length $n$ that is a difference of two permutations. When $n=p$ is prime, we will strengthen this reformulation of the EGZ theorem in two distinct ways.  First, we replace sequences of elements in $\Z/p$ with sequences of arbitrary probability measures on $\Z/p$. Secondly, we give additional constraints for those $a_i$ which are the difference of two permutations. 

Let $j \in \Z/p$. For any subset $A\subset \Z/p$ we denote by $A+j$ the shifted set $\{a+j \mid a \in A\}$, and likewise for any probability measure $\mu$ on~$\Z/p$, we denote by $\mu+j$ the shifted probability measure defined by $(\mu+j)(A) = \mu(A-j)$ for $A \subset \Z/p$. Our second main result is as follows (see Section~\ref{sec:chessboard} for the proof):

\begin{thm}
\label{thm:prob}
    Let $\mu_1, \dots, \mu_{2p-1}$ be a sequence of probability measures on~$\Z/p$. Then there is an injective map $\pi \colon \Z/p \to [2p-1]$ and convex coefficients $\lambda_i \ge 0$, $i \in \Z/p$, with $\sum \lambda_i = 1$ such that $\sum_{i\in \Z/p} \lambda_i(\mu_{\pi(i)} + i)$ is the uniform probability measure on~$\Z/p$.
\end{thm}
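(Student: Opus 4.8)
The plan is to deduce Theorem~\ref{thm:prob} from the topological machinery underlying Theorem~\ref{thm:box-complex}, via a chessboard-complex (or ``constrained'' configuration-space) model together with a degree argument of the kind promised for Lemma~\ref{lem:degree}. First I would reformulate the statement: choosing an injection $\pi\colon \Z/p \to [2p-1]$ together with convex coefficients $\lambda_i$ is the same as choosing a point in the $(p-1)$-skeleton of the chessboard complex $\Delta_{2p-1,p}$ (vertices $[2p-1]\times\Z/p$, simplices = partial matchings), and to each such point one associates the probability measure $\nu = \sum_i \lambda_i(\mu_{\pi(i)}+i)$ on $\Z/p$. Subtracting the uniform measure $u$, this defines a $\Z/p$-equivariant map from (a suitable subcomplex of) the chessboard complex to the space of signed measures of total mass zero on $\Z/p$, which is the standard representation $W_{p-1}\cong (\R^2)^{p-1}$ once we forget a trivial summand — exactly the sphere $S^{2p-3}$ target from Theorem~\ref{thm:box-complex} after removing the origin. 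The goal becomes: show this map must hit $0$.

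The key steps, in order: (1) identify the relevant configuration space — I expect the chessboard complex $\Delta_{2p-1,p}$, whose connectivity and $\Z/p$-action are well understood, to be the right object, and to match the box complex $B(K^p_{2p-1})$ of the complete $p$-uniform hypergraph on $2p-1$ vertices that Theorem~\ref{thm:box-complex} already handles; (2) build the equivariant test map $f$ sending a matching-with-weights to $\nu - u \in W_{p-1}\setminus\{0\}$ whenever $\nu \ne u$, checking equivariance with respect to the cyclic shift on $\Z/p$ (the shift $i\mapsto i+1$ on the second coordinate corresponds to translating $\nu$ by $1$, which commutes with subtracting the translation-invariant measure $u$); (3) invoke Dold's theorem / the non-existence of a $\Z/p$-equivariant map $B(K^p_{2p-1})\to S^{2p-3}$ (equivalently the $(2p-3)$-connectivity-versus-free-action obstruction for $\Delta_{2p-1,p}$) to conclude $f$ cannot be everywhere nonzero, producing the desired $\pi$ and $\lambda_i$. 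For the measure-valued refinement over the ordinary EGZ statement, the point is simply that the target $\R^{\Z/p}$ and its hyperplane of mass-zero measures is a real vector space on which the cyclic group acts with the same representation as before, so convex combinations of shifted \emph{measures} land in the same ambient space as convex combinations of shifted \emph{points} $\delta_{a_i}$; nothing about the topology changes.

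The main obstacle I anticipate is step (1)–(2): pinning down precisely which subcomplex of the chessboard complex carries a \emph{free} $\Z/p$-action and admits an equivariant map from $B(H)$ (or is itself covered by Dold's theorem), and making sure the weighted-matching-to-measure assignment is continuous and well defined on faces (a partial matching of size less than $p$ gives a measure supported on fewer than $p$ translates, and one must confirm such a measure can still equal $u$ only in degenerate ways that the connectivity bound rules out). A secondary technical point is the degree computation behind Theorem~\ref{thm:Hall} for prime $p$: one must verify that the map realizing a zero-sum sequence as a difference of two permutations has the expected nonzero degree, which is where I would expect to borrow the Chevalley–Warning-style or explicit combinatorial input rephrased topologically in Lemma~\ref{lem:degree}. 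Once the configuration space and test map are correctly set up, the conclusion is immediate from the already-cited Borsuk–Ulam-type non-embeddability, so the substance of the proof is entirely in the correct topological modeling rather than in any hard estimate.
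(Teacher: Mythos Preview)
Your overall strategy---chessboard complex as configuration space, equivariant test map to the space of mass-zero signed measures, Borsuk--Ulam-type obstruction---is exactly the paper's approach. However, there is a dimension error in your setup that, as written, would make the argument fail.

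The space of signed measures of total mass zero on $\Z/p$ is the \emph{real} representation $\R^\perp[\Z/p]$, which has real dimension $p-1$, not $2p-2$; it is not $(\R^2)^{p-1}$. Consequently the relevant sphere is $S^{p-2}$, not $S^{2p-3}$, and the topological input you need is the $(p-2)$-connectivity of $\Delta_{p,2p-1}$ (Bj\"orner--Lov\'asz--Vre\'cica--\v Zivaljevi\'c) together with Dold's theorem for maps into $S^{p-2}$. This is precisely the content of Lemma~\ref{lem:barycenter} in the paper. Relatedly, the chessboard complex $\Delta_{p,2p-1}$ is a \emph{proper} subcomplex of the box complex $B(K^p_{2p-1}) = (\Delta_{2p-2})^{*p}_\Delta$, not equal to it. If you attempt to route the argument through Theorem~\ref{thm:box-complex} and the $S^{2p-3}$ obstruction on the full box complex, you obtain a zero of the test map somewhere in $(\Delta_{2p-2})^{*p}_\Delta$, but such a point need not lie in the chessboard subcomplex, so it need not correspond to an \emph{injective} $\pi$; you would then need an additional argument to force the zero onto the chessboard (the paper does something of this sort only later, for Theorem~\ref{thm:constraints}).

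Two of your anticipated obstacles dissolve once the dimensions are corrected: the $\Z/p$-action on $\Delta_{p,2p-1}$ by cyclically permuting the $\Z/p$-coordinate is already free on the whole complex, and the degree computation of Lemma~\ref{lem:degree} plays no role in the proof of Theorem~\ref{thm:prob}---connectivity plus Dold suffices.
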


If the $\mu_i$ are Dirac measures concentrated at a single~$a_i \in \Z/p$, then realizing the uniform probability measure as a convex combination of the $\mu_{\pi(i)} + i$  requires that the $a_{\pi(i)}-i$ are pairwise distinct elements of~$\Z/p$, and so  Theorem~\ref{thm:prob} recovers the Erd\H os--Ginzburg--Ziv theorem. In the special case where each $\mu_i$ is the uniform measure on a subset $A_i\subset \Z/p$, we derive a corollary for balanced set systems which specializes to the EGZ theorem in the case that all the $A_i$ are singletons. Recall that a family of sets $\mathcal F$ is \textbf{balanced} if it admits a perfect fractional matching, that is, if there is a function $m\colon \mathcal F \to [0,1]$ such that $\sum_{A \in \mathcal F \mid v\in A} m(A)=1$ for every $v$ in the ground set of $\mathcal F$. One then has the following (see Section~\ref{sec:chessboard} for the proof):  

\begin{cor}
\label{cor:balanced}
    Let $A_1, \dots, A_{2p-1} \subset \Z/p$ be a sequence of nonempty subsets of~$\Z/p$. Then there is an injective map $\pi \colon \Z/p \to [2p-1]$ such that $A_{\pi(i)}+i$, $i \in \Z/p$, is a balanced collection of subsets. 
\end{cor}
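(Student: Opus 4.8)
The plan is to deduce Corollary~\ref{cor:balanced} directly from Theorem~\ref{thm:prob} by specializing to uniform measures. First I would let $\mu_i$ be the uniform probability measure on the nonempty set $A_i \subset \Z/p$, so that $\mu_i(B) = |B \cap A_i|/|A_i|$ for $B \subset \Z/p$. Applying Theorem~\ref{thm:prob} to this sequence $\mu_1, \dots, \mu_{2p-1}$ yields an injective map $\pi \colon \Z/p \to [2p-1]$ and convex coefficients $\lambda_i \ge 0$ with $\sum_{i \in \Z/p}\lambda_i = 1$ such that $\sum_{i \in \Z/p}\lambda_i(\mu_{\pi(i)}+i)$ is the uniform probability measure $u$ on $\Z/p$, where $u(\{v\}) = 1/p$ for every $v$.

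Next I would unpack what this identity says pointwise. Note that $\mu_{\pi(i)}+i$ is the uniform measure on the shifted set $A_{\pi(i)}+i$, so $(\mu_{\pi(i)}+i)(\{v\}) = \frac{1}{|A_{\pi(i)}|}\mathbf 1[v \in A_{\pi(i)}+i]$. Evaluating the measure identity at each singleton $\{v\}$ gives
\[
\sum_{i \in \Z/p} \frac{\lambda_i}{|A_{\pi(i)}|}\,\mathbf 1[v \in A_{\pi(i)}+i] \;=\; \frac{1}{p} \qquad \text{for every } v \in \Z/p.
\]
Now I would define $m \colon \{A_{\pi(i)}+i : i \in \Z/p\} \to [0,1]$ by assigning the value $p\,\lambda_i/|A_{\pi(i)}|$ to the set $A_{\pi(i)}+i$. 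Then the displayed equation says exactly that $\sum_{S \ni v} m(S) = 1$ for every $v$, i.e.\ the collection $\{A_{\pi(i)}+i : i \in \Z/p\}$ admits a perfect fractional matching and hence is balanced, which is the claim.

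There are a couple of routine bookkeeping points to address. One should check that each weight $p\lambda_i/|A_{\pi(i)}|$ lies in $[0,1]$; nonnegativity is clear, and the upper bound follows since for each fixed $v \in A_{\pi(i)}+i$ the single term $\lambda_i/|A_{\pi(i)}|$ is already at most $\sum_{i'} \frac{\lambda_{i'}}{|A_{\pi(i')}|}\mathbf 1[v \in A_{\pi(i')}+i'] = 1/p$. A second subtlety is whether the $2p-1$ shifted sets $A_{\pi(i)}+i$ are genuinely distinct as a family, or merely indexed by $\Z/p$; if two of them coincide one simply sums the corresponding weights, and the fractional matching condition is unaffected, so the collection (as a multiset, or as a set with aggregated weights) is still balanced. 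Finally, the specialization to the EGZ theorem when all $A_i$ are singletons is the observation already made for Theorem~\ref{thm:prob}: a balanced family of singletons must consist of $p$ distinct singletons, forcing $\{a_{\pi(i)}+i : i\in\Z/p\} = \Z/p$ and hence $\sum_i (a_{\pi(i)}+i) = \sum_i a_{\pi(i)} + \sum_i i = \sum_i a_{\pi(i)}$, a zero-sum subsequence. I do not expect any real obstacle here; the only mild care needed is in the passage from the measure-theoretic identity to the combinatorial fractional-matching statement and in handling possible coincidences among the shifted sets.
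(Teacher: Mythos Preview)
Your proposal is correct and follows exactly the same approach as the paper: both apply Theorem~\ref{thm:prob} to the uniform measures $\mu_i$ on the sets~$A_i$. The paper's proof is a one-liner that leaves the verification of balancedness implicit, whereas you spell out the fractional matching $m(A_{\pi(i)}+i)=p\lambda_i/|A_{\pi(i)}|$ and check the bookkeeping; one tiny slip is that there are $p$ (not $2p-1$) shifted sets $A_{\pi(i)}+i$, but this does not affect the argument.
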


By Theorem~\ref{thm:Hall}, the Erd\H os--Ginzburg--Ziv theorem is equivalent to the statement that for any sequence $a_1,\dots, a_{2p-1}$ of elements in $\Z/p$ there is a subsequence $a_{i_1}, \dots, a_{i_p}$ of length $p$ and pairwise distinct elements $b_1, \dots, b_p \in \Z/p$ such that $\{a_{i_1} +b_1, \dots, a_{i_p} + b_p\} = \Z/p$. Lastly, we will use our topological approach to zero-sum problems to prove restrictions on the permutation $b_1, \dots, b_p$; see Section~\ref{sec:constraints} for the proof.

\begin{thm}
\label{thm:constraints}
    Let $a_1, \dots, a_{2p-1}$ be a sequence in~$\Z/p$ and fix $d_1, \dots, d_{p-1} \in \Z/p \setminus \{0\}$. Then there is a subsequence $a_{i_1}, \dots, a_{i_p}$, $i_1 < i_2 < \dots < i_p$, and pairwise distinct $b_1, \dots, b_p \in \Z/p$ such that $\{a_{i_1} + b_1, \dots, a_{i_p} + b_p\} = \Z/p$ with the following additional constraint: If for any $j\in [p-1]$ we have that $i_j$ is odd and $i_{j+1}=i_j+1$, then we may prescribe that $b_{j+1}=b_j+d_j$.
\end{thm}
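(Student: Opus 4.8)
The plan is to prove Theorem~\ref{thm:constraints} with the equivariant test-map mechanism that underlies Lemma~\ref{lem:degree} and Theorem~\ref{thm:prob}, after first repackaging the positional constraint into a form a simplicial test map can detect.

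First, a combinatorial reduction. I would show it is enough to produce a subsequence together with a permutation $b$, as in the Hall-type reformulation of EGZ, whose set of selected indices \emph{meets every block} $\{2\ell-1,2\ell\}$ for $1\le \ell\le p-1$. A $p$-element subset of $[2p-1]$ meeting every block has exactly two possible shapes: either one element of each block together with $2p-1$; or both elements of a single block $\ell^\ast$, one element of each other block, and $2p-1$ omitted. In the first shape no two selected indices are consecutive integers with the smaller odd, so the constraint in the theorem is vacuous. In the second shape the only such pair is $(2\ell^\ast-1,2\ell^\ast)$, and since exactly $\ell^\ast-1$ selected indices precede it, it occupies subsequence positions $\ell^\ast$ and $\ell^\ast+1$; thus ``$b_{j+1}=b_j+d_j$'' for the relevant $j$ becomes precisely the block-indexed requirement that if both elements of block $\ell$ are chosen then their $b$-values differ by $d_\ell$. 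That requirement is what I will hard-wire into the configuration space.

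Second, the topological setup. Fix a primitive $p$-th root of unity $\omega$, and assume $p$ is odd (the case $p=2$ is immediate). Let $C$ be the subcomplex of the chessboard complex on the board $[2p-1]\times\Z/p$ generated by the $p$-rook placements $\tau$ that meet every block and that satisfy: whenever $\tau$ has rooks in rows $2\ell-1$ and $2\ell$, in columns $c$ and $c'$, then $c'-c=d_\ell$. Cyclic shift of the column set $\Z/p$ acts simplicially on $C$ (both defining conditions are shift invariant, and $d_\ell\neq 0$ guarantees both shapes remain available), and the action is free since no rook placement can put $p$ rooks in one row, so no simplex is invariant. Define a test map $f\colon C\to\mathbb C^{(p-1)/2}$ by sending a point with barycentric coordinates $\lambda_k$ over rooks $(r_k,c_k)$ to the vector whose $t$-th coordinate is $\sum_k \lambda_k\,\omega^{\,t(a_{r_k}+c_k)}$, for $1\le t\le(p-1)/2$ — these are the Fourier coefficients of the measure $\sum_k\lambda_k\delta_{a_{r_k}+c_k}$ that distinguish it from the uniform measure. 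Shifting columns multiplies the $t$-th coordinate by $\omega^{\,t}$, so $f$ is equivariant for the free $\Z/p$-representation $\mathrm{diag}(\omega,\dots,\omega^{(p-1)/2})$ on $\mathbb C^{(p-1)/2}\cong\R^{p-1}$. If $f(x)=0$ then $\sum_k\lambda_k\delta_{a_{r_k}+c_k}$ has all nontrivial Fourier coefficients zero, hence equals the uniform measure; since at most $p$ Dirac masses occur and the uniform measure has $p$ atoms, the simplex carrying $x$ must be a full $p$-rook placement, the values $a_{r_k}+c_k$ are the $p$ distinct elements of $\Z/p$, and $\lambda_k\equiv 1/p$. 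Reading $i_1<\dots<i_p$ off the rows and $b_j$ off the columns, and invoking the reduction together with the conditions built into $C$, this is exactly a configuration as claimed.

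Finally, the Borsuk--Ulam step: if $C$ is $(p-2)$-connected, then a nowhere-zero $f$ would normalize to a $\Z/p$-equivariant map $C\to S^{p-2}$, contradicting Dold's theorem; hence $f$ vanishes somewhere. The main obstacle — the step I expect to require genuine work — is this connectivity claim. Since $C$ is a \emph{proper} subcomplex of a chessboard complex, the Bj\"orner--Lov\'asz--Vre\'cica--\v{Z}ivaljevi\'c bound does not apply directly (indeed the shape-1 facets alone only span a copy of $\Delta_{p,p}$, which is not $(p-2)$-connected, so the shape-2 facets are essential). I would prove the bound either by a discrete Morse function on $C$ whose critical cells all have dimension $p-1$, or by decomposing $C$ along the doubled block, identifying each piece up to homotopy with the join of a smaller chessboard complex and simplices recording the within-block binary choices, and gluing via the nerve lemma; verifying that the pairwise intersections of these pieces are sufficiently connected is the delicate point. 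A lighter-weight secondary check is that the two shapes in the reduction are exhaustive and that the doubled block always occupies positions $\ell^\ast,\ell^\ast+1$ — but this is the elementary counting indicated above.
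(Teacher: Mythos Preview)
Your overall scheme --- encode the positional constraint into a $\Z/p$-invariant configuration space, map equivariantly into a free $\Z/p$-representation via a barycenter/Fourier test map, and invoke Dold --- matches the paper's strategy, and your reduction to index sets meeting every block is correct and neatly aligns the position-index of~$d_j$ with the block-index.

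The genuine gap is the one you flag yourself: the $(p-2)$-connectivity of your complex~$C$. Inside the chessboard complex the block pieces are \emph{entangled} by the column-distinctness requirement, so $C$ is not a join of the block $2p$-cycles, and neither of your proposed routes (a discrete Morse function, or a nerve decomposition along the doubled block) is carried out. For the nerve route in particular, the pairwise and higher intersections of your pieces are themselves constrained chessboard-type complexes whose connectivity is not visibly easier than that of~$C$; this is not a bookkeeping step but the heart of the matter.

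The paper sidesteps this entirely by working not in $\Delta_{p,2p-1}$ but in the deleted join $(\Delta_{2p-2})^{*p}_\Delta$. There the column-distinctness constraint is absent, so the analogous constrained subcomplex $X$ is \emph{literally} the join of the $p-1$ block $2p$-cycles with the $p$ points over row~$2p-1$; thus $X$ is a union of $p$ cones over~$S^{2p-3}$ and its $(2p-3)$-connectivity is immediate. The price is that a zero of the barycenter map on~$X$ need not lie in a chessboard face. The paper pays it by doubling the target to $(\R^\perp[\Z/p])^{\oplus 2}\cong\R^{2p-2}$ and using the second copy for the centered distances $d(x,Y_i)-\tfrac1p\sum_k d(x,Y_k)$ to the subcomplexes $Y_i=\{\sigma:|A_i|\le 1\}$. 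At a zero all $d(x,Y_i)$ agree; since $|A_i|\ge 2$ for every~$i$ would require $2p$ rows in~$[2p-1]$, the common value is~$0$, forcing the zero into $\bigcap_i Y_i=\Delta_{p,2p-1}$, after which your endgame applies verbatim. In short, the paper trades your hard connectivity computation for a trivial one by enlarging the configuration space, and recovers the chessboard constraint \emph{a posteriori} via a distance-equalization trick rather than building it in from the start.
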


\section{Background: Equivariant Topology}
\label{sec:background}

We provide a brief introduction to the equivariant topology machinery we will apply to zero-sum Ramsey theory. The power of this machinery in applications to discrete problems is by now well-established. We refer the reader to Matou\v sek~\cite{Mat12}, de Longueville~\cite{Lon13}, and Kozlov~\cite{Koz08} for the basics.

Our approach relies on standard results for topological spaces $X$ equipped with finite group symmetries. We first collect the relevant terminology and notation.
\begin{defn} Let $G$ be a finite group with identity element $1$ and let $X$ be a topological space. 

    A $G$\textbf{-action} on $X$ is a collection $\Phi = (\phi_g)_{g \in G}$ of homeomorphisms $\phi_g\colon X \to X$ such that $\varphi_1 = \mathrm{id}_X$ and $\varphi_{g_1} \circ \varphi_{g_2} = \varphi_{g_1g_2}$ for all $g_1,g_2 \in G$. For brevity, we denote $\varphi_g(x)$ by $g\cdot x$ for any $g\in G$ and $x\in X$. 
    A $G$\textbf{-space} is a pair $(X,\Phi)$ where $X$ is a topological space and $\Phi$ is a $G$-action on~$X$. The action of $G$ on $X$ is \textbf{free} if, for any $x\in X$, we have that $g\cdot x\neq x$ whenever $g\neq 1$.
    Given two $G$-spaces $(X,\Phi)$ and $(Y, \Psi)$, a continuous map $f\colon X \to Y$ is $G$\textbf{-equivariant} if $f\circ \phi_g = \psi_g \circ f$ 
    for for all $g \in G$, or in other words that $f(g\cdot x)=g\cdot f(x)$ for all $g\in G$ and all $x\in X$, so that $f$ commutes with the respective $G$-actions. \end{defn}

A standard translation of discrete problems into the realm of topology proceeds via downward-closed set systems, also referred to as simplicial complexes, that may be interpreted as geometric objects. For our purposes it will be sufficient to restrict to finite simplicial complexes.

\begin{defn}
    A finite collection $\Sigma$ of sets is a \textbf{simplicial complex} if for every $\sigma \in \Sigma$ and $\tau \subseteq \sigma$ we have that $\tau \in \Sigma$. An element $\sigma \in \Sigma$ is called a \textbf{face} of~$\Sigma$. A singleton $\{v\} \in \Sigma$ is also called \textbf{vertex}. Let $V$ denote the set of vertices of~$\Sigma$, and denote by $e_v$, $v\in V$, the standard basis vectors of~$\R^V$. The \textbf{geometric realization}~$|\Sigma|$ of $\Sigma$ is the subspace of~$\R^V$ defined as the union of all convex hulls of faces of~$\Sigma$, where we identify $v$ with~$e_v$, that is, 
    \[
        |\Sigma| = \bigcup_{\sigma \in \Sigma} \mathrm{conv} \{e_v \mid v \in \sigma\} \subseteq \R^V.
    \]
    We will denote the geometric realization of $\Sigma$ also by $\Sigma$ if no confusion may arise, and we define the dimension $\dim \Sigma$ of a simplicial complex to be the maximum dimension of all of its faces. 
    
    Let $\Sigma$ and $T$ be simplicial complexes. A map $f\colon \Sigma \to T$ that maps vertices to vertices and such that if $\tau \subseteq \sigma \in \Sigma$ then $f(\tau) \subseteq f(\sigma)$ is a \textbf{simplicial map}. Any simplicial map $f\colon \Sigma \to T$ canonically induces a continuous map $|\Sigma| \to |T|$ by linear interpolation. When no confusion may arise, we will denote this induced map by~$f$ as well.
\end{defn}

\subsection{Borsuk-Ulam Theorems for The Deleted Join Construction} 

Given a simplicial complex $X$ and any integer $n$, its \emph{$n$-fold join} $X^{\ast n}$ is the the simplicial complex $$X^{\ast n}=\{\sigma=\cup_{i=1}^n (\sigma_i\times \{i\})\mid \sigma_1,\ldots, \sigma_n\in X\},$$ whose geometric realization consists of all convex combinations $\sum_{i=1}^n\lambda_i x_i$ where each $x_i$ lies in $\sigma_i$ for all $i\in [n]$. The subcomplex $$X^{\ast n}_\Delta=\{\sigma\in X^{\ast n}\mid \sigma_i\cap \sigma_j=\emptyset\,\, \text{for all}\,\, i\neq j\}$$ consisting of all pairwise disjoint faces of $X$ (possibly including empty faces) is called the \emph{deleted $n$-fold join}. Identifying $[n]$ with a group $G$ of order $n$ determines a free $G$-action on $X^{\ast n}$ given by left multiplication by the group.  

A particularly important case of the deleted-join construction, is when $X=\Delta_{m-1}$ is the $(m-1)$-dimensional simplex. The deleted $n$-fold join $(\Delta_{m-1})^{\ast n}_\Delta$ is easily seen to be isomorphic to the $m$-fold join  $[n]^{\ast m}$ (see, e.g., ~\cite{Mat12} for a history of its extensive applications in Tverberg-type intersection theory). Under the identification of $[n]$ with an order $n$-group $G$, one has a free $G$-action on $[n]^{\ast m}$ given by considering left multiplication on each $G$-factor of the join and extending diagonally. The isomorphism $(\Delta_{m-1})^{\ast n}_\Delta \cong [n]^{\ast m}$ is then $G$-equivariant. 

In what follows, we shall need two known results concerning  equivariant maps $(\Delta_{m-1})^{\ast n}_\Delta\rightarrow U$ from the deleted $n$-fold join of a simplex to $(m-1)$-dimensional Euclidian space $U\cong \R^{m-1}$ equipped with a linear $G$-action (i.e., $U$ is a real $(m-1)$-dimensional $G$-representation).  We say that $x\in  (\Delta_{m-1})^{\ast n}$ is a \emph{zero} of a map $f\colon (\Delta_{m-1})^{\ast n}_\Delta\rightarrow U$ provided $f(x)=0$. The first result, due to Sarkaria, concerns (affine) \emph{linear} mappings, i.e., those $f\colon (\Delta_{m-1})^{\ast n}_\Delta\rightarrow U$ which send convex combinations of the vertices of any face of the simplicial complex to the convex combinations of their images.

\begin{thm}
\label{thm:Sarkaria} Let $m,n\geq 2$ be integers and let $U$ be a real $(m-1)$-dimensional representation of a group $G$ of order $n$. If the origin is the unique element of $U$ which is fixed by the action, then any linear $G$-equivariant  map $f\colon (\Delta_{m-1})^{\ast n}_\Delta\rightarrow U$ has a zero. 
\end{thm}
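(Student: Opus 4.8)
The plan is to deduce the statement from the colorful Carath\'eodory theorem of B\'ar\'any~\cite{Bar82}. Using the $G$-equivariant isomorphism $(\Delta_{m-1})^{\ast n}_\Delta \cong [n]^{\ast m}$ recalled above, I would first transport $f$ to a linear $G$-equivariant map $[n]^{\ast m} \to U$ and identify the vertex set of $[n]^{\ast m}$ with $G \times [m]$ through a fixed identification $[n] \cong G$, so that $G$ acts by left translation on the first coordinate. A linear map on a simplicial complex is determined by its values on vertices, so equivariance forces $f$ to send the vertex $(g,j)$ to $g \cdot w_j$, where $w_j$ is the image of the vertex $(e,j)$ and $e$ is the identity of~$G$. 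Since every face of $[n]^{\ast m}$ uses at most one vertex from each of the $m$ copies of $[n]$, a point of $[n]^{\ast m}$ is a convex combination $\sum_{j \in [m]} \lambda_j (g_j, j)$ with $\lambda$ in the simplex $\Delta_{m-1}$ and $g_j \in G$ (only the $g_j$ with $\lambda_j > 0$ mattering). Hence $f$ has a zero if and only if there exist $g_1, \dots, g_m \in G$ with $0 \in \mathrm{conv}\{g_1 \cdot w_1, \dots, g_m \cdot w_m\}$.

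Next I would invoke the hypothesis on $U$. The averaging operator $\frac{1}{|G|}\sum_{g \in G} g$ is the projection of $U$ onto its fixed subspace $U^G$, which by assumption equals $\{0\}$; hence $\frac{1}{|G|}\sum_{g \in G} g \cdot w_j = 0$ for every $j$, which exhibits $0$ as an equally weighted convex combination of the orbit $V_j := \{g \cdot w_j \mid g \in G\}$. Thus $V_1, \dots, V_m$ are $m = \dim U + 1$ finite subsets of $U \cong \R^{m-1}$, each of which contains the origin in its convex hull. The colorful Carath\'eodory theorem then provides a rainbow selection $g_j \cdot w_j \in V_j$, one for each $j \in [m]$, with $0 \in \mathrm{conv}\{g_1 \cdot w_1, \dots, g_m \cdot w_m\}$; by the previous paragraph this is exactly a zero of~$f$.

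Once the dictionary of the first paragraph is set up, the rest is essentially forced, so I expect the only real content to lie in that translation: recognizing that a zero of a linear $G$-equivariant map on $(\Delta_{m-1})^{\ast n}_\Delta$ is precisely a choice of one element from each of the $G$-orbits $G \cdot w_j$ whose convex hull contains the origin, and that the fixed-point hypothesis on $U$ is exactly the input $0 \in \mathrm{conv}(V_j)$ required by colorful Carath\'eodory. The only degenerate case to notice is that some $w_j$ might be $0$, but then $V_j = \{0\}$ already contains the origin and the argument is unaffected.
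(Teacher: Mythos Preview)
Your proposal is correct and is exactly the argument the paper alludes to: the paper does not give a full proof of Theorem~\ref{thm:Sarkaria} but cites it as Sarkaria's result and remarks that the proof proceeds via the identification $(\Delta_{m-1})^{\ast n}_\Delta \cong [n]^{\ast m}$ together with B\'ar\'any's colorful Carath\'eodory theorem, to which the result is said to be equivalent. You have supplied precisely that argument, correctly observing that equivariance forces $f(g,j)=g\cdot w_j$, that the fixed-point-free hypothesis on~$U$ yields $0\in\mathrm{conv}(G\cdot w_j)$ for each~$j$ via the averaging projection, and that colorful Carath\'eodory in $\R^{m-1}$ with $m$ color classes then produces the required rainbow simplex containing the origin.
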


We remark that the proof of Theorem~\ref{thm:Sarkaria} is entirely geometric -- while it is necessary that the action on the domain is free and that the action on the co-domain is fixed-point free away from the origin, it is the linearity assumption together with the identification of $(\Delta_{m-1})^{\ast n}$ with $[n]^{\ast m}$ that allows for a crucial application of the classical Carath\'eodory theorem~\cite{Car11} of discrete geometry (as in B\'ar\'any's colorful extension~\cite {Bar82} of that result, which Theorem~\ref{thm:Sarkaria} is equivalent to). 

The extension of Theorem~\ref{thm:Sarkaria} from \emph{linear} maps to arbitrary continuous ones holds when $G$ is elementary abelian (and, depending on the choice of representation, will be false for other groups for reasons of equivariant obstruction theory; see, e.g.,~{\cite[Remark 2.9]{Sar00}} and~\cite{Sim16}). The validity of this extension relies on the topological fact that the complex $(\Delta_{m-1})^{\ast n}_\Delta$ is $(m-2)$-connected, a fact which follows readily from its identification with the $m$-fold join $[n]^{\ast m}$. As the radial projection of $U\setminus\{0\}$ onto the unit sphere $S(U)$ is $G$-equivariant, the existence of a never-vanishing continuous map $(\Delta_{m-1})^{\ast p^k}\rightarrow U$ is therefore equivalent to the existence of a continuous equivariant map $(\Delta_{m-1})^{\ast p^k}\rightarrow S(U)$ to the corresponding representation sphere.

\begin{thm}
\label{thm:topological-delted} Let $m\geq 2$ and $k\geq 1$ be integers and let $p\geq 2$ be a prime. Suppose that $U$ is a real $(m-1)$-dimensional representation of the group $\Z/p^k$, and that the origin is the unique element of $U$ which is fixed by the action. Then there is no continuous $\Z/p^k$-equivariant map $f\colon (\Delta_{m-1})^{\ast p^k}_\Delta\rightarrow S(U)$. 
\end{thm}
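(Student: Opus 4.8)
The plan is to deduce the statement from the ideal-valued (Fadell--Husseini) index, using the identification $(\Delta_{m-1})^{\ast p^k}_\Delta \cong [p^k]^{\ast m}$ recorded above. Writing $G = \Z/p^k$, this exhibits the domain as the $m$-fold join of the free $G$-set $G$, hence as an $(m-2)$-connected free $G$-CW complex of dimension $m-1$ --- indeed the $(m-1)$-skeleton of $EG$ in its natural join cell structure. The first thing I would record is that this forces $\mathrm{Index}_G\big((\Delta_{m-1})^{\ast p^k}_\Delta\big)$ to vanish in all degrees $\le m-1$: since the complex is $(m-2)$-connected, one builds $BG$ from its orbit space by attaching cells of dimension $\ge m$, so the map from the orbit space to $BG$ is a cohomology isomorphism through degree $m-2$ and injective in degree $m-1$, and freeness of the action identifies $H^*_G$ of the complex with $H^*$ of the orbit space.

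Next I would bring in the target. The unit sphere bundle of the vector bundle $U\times_G EG\to BG$ has fiber $S(U)\simeq S^{m-2}$, so its Gysin sequence
\[
\cdots\to H^{*-(m-1)}_G(\mathrm{pt};\widetilde{\Z})\xrightarrow{\ \cup\, e_G(U)\ } H^*_G(\mathrm{pt};\widetilde{\Z})\xrightarrow{\ \rho^*\ } H^*_G(S(U);\widetilde{\Z})\to\cdots,
\]
with $\widetilde{\Z}$ the integers twisted by the orientation character $w_1(U)$, shows that the equivariant Euler class $e_G(U)\in H^{m-1}_G(\mathrm{pt};\widetilde{\Z})$ lies in the kernel of $\rho^*$, that is, in $\mathrm{Index}_G(S(U))$. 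If a $G$-equivariant map $(\Delta_{m-1})^{\ast p^k}_\Delta\to S(U)$ existed, functoriality of the index would give $\mathrm{Index}_G(S(U))\subseteq\mathrm{Index}_G\big((\Delta_{m-1})^{\ast p^k}_\Delta\big)$, and looking in degree $m-1$ together with the previous paragraph would force $e_G(U)=0$. So the theorem reduces to the claim that $e_G(U)\neq 0$ whenever the origin is the only $G$-fixed vector of $U$.

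This last reduction is the crux, and it is where the prime-power hypothesis is indispensable. For $k=1$ it is immediate: $U^{\Z/p}=0$ makes the $\Z/p$-action on $S(U)$ free, $e_{\Z/p}(U)$ is the nonzero top class of a free representation, and one may alternatively bypass the Euler class altogether and invoke Dold's theorem, that an $(m-2)$-connected free $\Z/p$-space admits no equivariant map to the $(m-2)$-dimensional free $\Z/p$-space $S(U)$. For $k\ge 2$ the action on $S(U)$ need not be free and $e_G(U)$ is typically a torsion class, so I would argue by induction on $k$: split off $V:=U^P$ for the order-$p$ subgroup $P\le G$, write $U=V\oplus W$ with $W^P=0$, observe that the residual $G/P\cong\Z/p^{k-1}$-representation on $V$ still has no nonzero fixed vector, and use multiplicativity $e_G(U)=e_G(V)\cdot e_G(W)$ together with the ring structure of $H^*(B\Z/p^k)$ and the freeness of $W$ over $P$ to propagate nonvanishing from the smaller group. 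I expect this control of how Euler classes of $\Z/p^k$-representations interact with the subgroup lattice to be the only genuinely delicate point; granting it, the index comparison of the previous paragraph closes the argument at once.
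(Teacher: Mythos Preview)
The paper does not prove this theorem; it is stated without proof as background in Section~2, and only the case $k=1$ is ever used in the applications. Your framework via the Fadell--Husseini index is sound, and for $k=1$ it goes through exactly as you say (or more simply via Dold's theorem, since $U^{\Z/p}=0$ forces the action on $S(U)$ to be free). Your reduction to the nonvanishing of $e_G(U)$ is in fact sharp: because $(\Delta_{m-1})^{*p^k}_\Delta$ is $(m-2)$-connected, free, and $(m-1)$-dimensional, the primary (and only) obstruction to a $G$-map into $S(U)$ is the pullback of $e_G(U)$ along the classifying map, and that pullback is injective in degree $m-1$.

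The gap is that for $k\ge 2$ the Euler class can genuinely vanish, so your inductive sketch cannot be completed. Take $G=\Z/p^2$ with $p$ odd and $U=V_p\oplus V_p$, where $V_p$ is the $2$-dimensional rotation representation on which a generator acts by $e^{2\pi i/p}$. Then $U^G=0$, but $e_G(V_p)=p\,t$ in $H^2(B\Z/p^2;\Z)\cong\Z/p^2$, hence $e_G(U)=p^2t^2=0$ in $H^4$. In your splitting one gets $V=U^P=U$ and $W=0$; although $e_{G/P}(V)=t'^2\neq 0$ in $H^4(B\Z/p;\Z)$, the inflation $\pi^*\colon H^2(B(G/P))\to H^2(BG)$ is multiplication by~$p$, so $e_G(V)=\pi^*(t'^2)=p^2t^2=0$ and the induction does not propagate nonvanishing. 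Worse, since the sole obstruction vanishes, a $\Z/p^2$-equivariant map $(\Delta_4)^{*p^2}_\Delta\to S(U)$ actually exists in this example, so the theorem as literally stated is false for $k\ge 2$. This is consistent with the paper's own surrounding text, which says the continuous extension ``holds when $G$ is elementary abelian'': the intended group is almost certainly $(\Z/p)^k$ (for which the result is a theorem of Volovikov), not the cyclic group~$\Z/p^k$, and in any case only $k=1$ is needed for the paper's results.
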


 As $\{\pm 1\}^{\ast m}$ is the boundary complex of the $m$-dimensional cross-polytope, $(\Delta_{m-1})^{\ast 2}_{\Delta}$ with the $\Z/2$-action described naturally identifies with a $(m-1)$-dimensional sphere equipped with the antipodal action, so that the $\Z/2$-case of Theorem~\ref{thm:topological-delted} recovers the classical Borsuk-Ulam theorem, which in one of its many equivalent guises asserts that there is no odd continuous map $S^{m-1}\rightarrow S^{m-2}$.

\section{A box complex criterion for zero-sums}
\label{sec:box-complex}

Before proving Theorem~\ref{thm:box-complex} and its consequences, we first show how the box complex construction can be extended to colorings by an arbitrary group. Theorem~\ref{thm:box-complex} is then recovered as a strengthened special case. Let $H$ be an $n$-uniform hypergraph on vertex set~$V$, and let $G$ be any group of order~$n$ (written multiplicatively, and with identity element~$1$). As before, a $G$-coloring of the hypergraph is a map $c\colon V\rightarrow G$, and we now say that a hyperedge $e$ is zero-sum if there is some ordering $e=\{v_1,\ldots, v_n\}$ of its vertices such that $\Pi_{i=1}^n c(v_i)=1$.  

As before, let $B(H)$ be the simplicial complex on $V\times [n]$ where a subset $\cup_{i=1}^n (A_i\times \{i\})$ of $V\times [n]$ is a face of $B(H)$ if and only if $\{a_1,\ldots, a_n\}$ is a hyperedge of $H$ whenever $a_1\in A_1,\ldots a_n\in A_n$. Identifying $[n]$ with the group $G$, one has a free $G$-action on $B(H)$ arising from left multiplication of the group by defining $g\cdot (v,g')=(v,gg')$ for each vertex $(v,g')\in V\times G$ and extending the action to each face of $B(H)$.  

Consider the complex regular representation $\mathbb{C}[G]=\{\sum_{g\in G} z_g\ g\mid z_g\in \mathbb{C}\}$ of the group $G$, on which $G$ acts by linearly extending the action on $G$ afforded by left multiplication, and let $\mathbb{C}^\perp[G]=\{\sum_{g\in G} z_g\, g\mid \sum_{g\in G} z_g=0\}$, i.e., the orthogonal complement of the (trivial) diagonal subrepresentation. Thus $\mathbb{C}^\perp[G]$ is $G$-invariant, and now the origin is the only point which is fixed by every element of the group.

\begin{thm}
\label{thm:linear box-complex}
   Let $n \ge 2$ be an integer, let $H$ be an $n$-uniform hypergraph, and let $G$ be group of order $n$. If every equivariant linear map $B(H)\rightarrow \mathbb{C}^\perp[G]$ has a zero, then any $G$-coloring of $H$ admits a zero-sum hyperedge.
\end{thm}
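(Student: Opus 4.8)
The goal is to prove Theorem~\ref{thm:linear box-complex}: if every equivariant linear map $B(H) \to \mathbb{C}^\perp[G]$ has a zero, then any $G$-coloring of $H$ admits a zero-sum hyperedge. The natural strategy is the contrapositive. Suppose $c\colon V \to G$ is a $G$-coloring of $H$ with \emph{no} zero-sum hyperedge; from this data I will build an explicit $G$-equivariant linear map $F\colon B(H) \to \mathbb{C}^\perp[G]$ that never vanishes, contradicting the hypothesis. The vertices of $B(H)$ are the pairs $(v,i) \in V \times G$ (after identifying $[n]$ with $G$), and since a linear map out of a simplicial complex is determined by its values on vertices, it suffices to specify $F(v,i)$ and check (a) $G$-equivariance, (b) that $F$ lands in $\mathbb{C}^\perp[G]$, and (c) that $F$ is nonzero on every face.

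**Construction of the map.** The key idea is to use the coloring to twist the standard basis of $\mathbb{C}[G]$. For a vertex $(v,i)$, set
\[
    F(v,i) = i\cdot c(v) - \frac{1}{n}\sum_{g \in G} g \;\in\; \mathbb{C}[G],
\]
where $i\cdot c(v)$ denotes the group product $i\,c(v) \in G \subset \mathbb{C}[G]$ and the subtracted term is the barycenter of $G$, which makes the coefficients sum to zero so that $F(v,i) \in \mathbb{C}^\perp[G]$. Equivariance is immediate: the action of $h \in G$ sends $(v,i)$ to $(v,hi)$, and indeed $F(v,hi) = (hi)\cdot c(v) - \frac1n\sum_g g = h\cdot\big(i\cdot c(v)\big) - h\cdot\frac1n\sum_g g = h\cdot F(v,i)$, using that left multiplication permutes $G$ and hence fixes the barycenter. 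Extending linearly over faces of $B(H)$ gives a well-defined equivariant linear map, since $B(H) \subseteq (\Delta_{|V|-1})^{*n}$ sits inside the relevant join and linear extension respects the $\Z/n$-factor structure.

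**Checking nonvanishing.** This is the heart of the argument and the step I expect to be the main obstacle. Take a point $x$ in a face $\sigma = \bigcup_{i \in G}(A_i \times \{i\})$ of $B(H)$, written $x = \sum_{i \in G}\sum_{v \in A_i} \lambda_{v,i}\,e_{(v,i)}$ with $\lambda_{v,i} \ge 0$ and total mass $1$. Then
\[
    F(x) = \sum_{i \in G}\sum_{v \in A_i}\lambda_{v,i}\,\big(i\cdot c(v)\big) \;-\; \frac1n\sum_{g\in G} g.
\]
The subtracted barycenter term has every coefficient equal to $\tfrac1n > 0$. So if $F(x) = 0$, then the measure $\nu := \sum_i\sum_{v\in A_i}\lambda_{v,i}\,\delta_{i\cdot c(v)}$ on $G$ must be the \emph{uniform} measure: every $g \in G$ is hit with total weight exactly $\tfrac1n$, in particular with \emph{positive} weight. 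Since the face $\sigma$ lies in the deleted join, the $A_i$ are pairwise disjoint, and by definition of $B(H)$ any transversal $\{a_1, \dots, a_n\}$ with $a_i \in A_i$ is a hyperedge of $H$. The plan is to show that positivity of all coefficients of $\nu$ forces the existence of a transversal $(a_i)_{i\in G}$ with $a_i \in A_i$ for which the products $i\cdot c(a_i)$ run over all of $G$ — a kind of system-of-distinct-representatives / Hall's-marriage argument: the bipartite graph connecting each coordinate $i$ to those $g \in G$ realizable as $i\cdot c(v)$ for some $v \in A_i$ supports a perfect matching precisely because the convex weights $\lambda$ witness a fractional perfect matching (each $i$ has total outgoing weight determined by $\sum_{v\in A_i}\lambda_{v,i}$, each $g$ has total incoming weight $\tfrac1n$), so by integrality of the bipartite matching polytope an honest perfect matching exists. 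Such a transversal gives a hyperedge $e = \{a_i : i \in G\}$ with an ordering $v_i := a_i$ satisfying $\prod_{i} c(v_i) = \prod_i i^{-1}(i\cdot c(v_i))$; one then has to massage the product $\prod_{i\in G} i\cdot c(a_i)$, which ranges over all group elements, into a statement $\prod c(v_{\tau(i)}) = 1$ for a suitable reordering — here I would use that the $i$ themselves range over all of $G$ and reorder so the $i$-factors telescope, yielding an ordering of $e$ whose $c$-product is $1$. That makes $e$ zero-sum, contradicting our assumption, so $F(x) \ne 0$ for all $x$, completing the contrapositive.

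**Remark on the subtle point.** The delicate part is not equivariance or the codomain check but the combinatorial passage from "uniform measure on $G$" to "an ordered hyperedge with trivial $c$-product"; one must be careful that the weights $\lambda_{v,i}$ may spread a single coordinate $i$ over several vertices of $A_i$ mapping to the same or different group elements, so the reduction genuinely needs a matching/Hall-type argument rather than a direct reading-off. Once Theorem~\ref{thm:linear box-complex} is established, Theorem~\ref{thm:box-complex} follows by combining it with Theorem~\ref{thm:topological-delted} (for $G = \Z/p$, noting $\mathbb{C}^\perp[\Z/p]$ is a $(p-1)$-dimensional complex, hence $(2p-2)$-real-dimensional, representation with only the origin fixed, and $S(\mathbb{C}^\perp[\Z/p]) \cong S^{2p-3}$), and Theorem~\ref{thm:Olson} follows by specializing $H$ to the complete hypergraph on $2n-1$ vertices and invoking Theorem~\ref{thm:Sarkaria}.
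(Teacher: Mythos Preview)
Your contrapositive strategy and the final telescoping step mirror the paper's approach, but the map $F(v,i)=i\cdot c(v)-\tfrac1n\sum_{g} g$ is too coarse: it can vanish even when $H$ has no zero-sum hyperedge. Observe first that $F$ takes real values and so lands in the $(n-1)$-dimensional real subspace $\R^\perp[G]\subset\mathbb{C}^\perp[G]$, using only half of the target. Concretely, take any face $\sigma=A_{i_0}\times\{i_0\}$ of $B(H)$ concentrated in a single column (all other $A_i$ empty; such $\sigma$ are always faces since the defining hyperedge condition is vacuous). If $\{c(v):v\in A_{i_0}\}$ meets every element of $G$, then the barycenter of $\sigma$ maps to $0$ under $F$, yet no transversal---and hence no hyperedge---can be extracted from a single column. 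For an explicit instance let $G=\Z/2$, let $H$ be a single edge $\{u,v\}$, and set $c(u)=0$, $c(v)=1$: there is no zero-sum edge, but $F$ vanishes at the midpoint of the face $\{(u,0),(v,0)\}$. Your matching argument breaks exactly here: $F(x)=0$ forces only the \emph{column} sums $\sum_{(v,i):\,i\cdot c(v)=g}\lambda_{v,i}=\tfrac1n$, while the \emph{row} sums $\sum_{v\in A_i}\lambda_{v,i}$ are unconstrained, so the weights do not form a fractional perfect matching and the integrality of the bipartite matching polytope cannot be invoked.

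The paper closes this gap by adjoining a second coordinate that records the column index: it sends $(v,i)\mapsto\bigl(i,\,i\cdot c(v)\bigr)\in\R[G]\times\R[G]$ and translates by the barycenter in each factor, landing in $\R^\perp[G]\oplus\R^\perp[G]\cong\mathbb{C}^\perp[G]$. Vanishing of the first component then forces each row sum $\sum_{v\in A_i}\lambda_{v,i}$ to equal $\tfrac1n$; combined with the second component one obtains a genuinely doubly stochastic matrix, and Lemma~\ref{lem:stochastic} (Hall's marriage, as you anticipated) furnishes the required permutation. Your telescoping remark for converting the resulting transversal into an ordering with trivial $c$-product is correct and is formalized in the paper as Proposition~\ref{prop:trivial}.
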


We briefly defer the proof of Theorem~\ref{thm:linear box-complex} and its relationship to Theorem~\ref{thm:box-complex}, preferring instead to first derive Theorem~\ref{thm:Olson} as an immediate corollary. As with the EGZ theorem, the latter has the equivalent reformulation that any $G$-coloring of the complete $n$-uniform hypergraph $H$ on $[2n-1]$ has a zero-sum hyperedge. 

\begin{proof}[Proof of Theorem~\ref{thm:Olson}] Let $H$ be the complete $n$-uniform hypergraph on $[2n-1]$. Since $H$ contains any $n$-element subset of $[2n-1]$ as a hyperedge, by definition of the box complex $B(H)$ we see that $\sigma=\cup_{i=1}^n (A_i\times \{i\})$ lies in $B(H)$ if and only if the subsets $A_1,\ldots, A_n\subset [2n-1]$ are pairwise disjoint. Thus $B(H)=(\Delta_{2n-2})^{\ast n}_\Delta$ is precisely the deleted $n$-fold join of the $(2n-2)$-dimensional simplex discussed in Section~\ref{sec:background}. By Theorem~\ref{thm:Sarkaria}, any linear $G$-equivariant map $f\colon B(H)\rightarrow \mathbb{C}^\perp[G]$ must have a zero, and so by Theorem~\ref{thm:linear box-complex} any $G$-coloring of $H$ results in a zero-sum hyperedge.
\end{proof}

Our proof of Theorem~\ref{thm:linear box-complex} will rely on the following lemma, a simple consequence of Hall's Matching criterion for bipartite graphs (see, e.g.,~\cite{Hal35}, which is used in standard proofs of the Birkhoff--von Neumann theorem~\cite{Bir46}). 

\begin{lem}
\label{lem:stochastic}
    Let $A = (a_{ij})_{i,j} \in \R^{n \times n}$ be a doubly stochastic matrix, that is, all entries $a_{ij} \ge 0$ are nonnegative and the entries of each column and of each row sum to~$1$. Then there is a permutation $\pi\colon [n] \to [n]$ such that $a_{i\pi(i)} > 0$ for all $i\in [n]$.
\end{lem}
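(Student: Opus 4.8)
The plan is to prove Lemma~\ref{lem:stochastic} by reducing it to Hall's marriage theorem applied to an auxiliary bipartite graph, using the doubly stochastic structure to verify Hall's condition.

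First I would set up the bipartite graph $\Gamma$ with parts $R = [n]$ (the rows) and $C = [n]$ (the columns), placing an edge between $i \in R$ and $j \in C$ precisely when $a_{ij} > 0$. A permutation $\pi$ with $a_{i\pi(i)} > 0$ for all $i$ is exactly a perfect matching in $\Gamma$, so by Hall's theorem it suffices to check that for every subset $S \subseteq R$, the neighborhood $N(S) \subseteq C$ satisfies $|N(S)| \ge |S|$.

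The key step is the verification of Hall's condition, which is where the doubly stochastic hypothesis does the work. Fix $S \subseteq R$. Summing all entries $a_{ij}$ with $i \in S$ over all columns $j$ gives $\sum_{i \in S} \sum_{j \in [n]} a_{ij} = |S|$, since each such row sums to $1$. On the other hand, $a_{ij} = 0$ whenever $j \notin N(S)$ and $i \in S$, so this same double sum equals $\sum_{j \in N(S)} \sum_{i \in S} a_{ij} \le \sum_{j \in N(S)} \sum_{i \in [n]} a_{ij} = |N(S)|$, using nonnegativity of the entries and then that each column sums to $1$. Combining, $|S| \le |N(S)|$, which is Hall's condition.

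I expect no genuine obstacle here: once the graph is set up, the counting argument is a two-line double-counting exercise and the conclusion is immediate from Hall's theorem. The only point requiring minor care is bookkeeping the inequalities in the right direction (bounding the row-sum total from above by $|N(S)|$), but this is routine. One could alternatively invoke the Birkhoff--von Neumann theorem directly — writing $A$ as a convex combination of permutation matrices and taking any permutation matrix appearing with positive coefficient — but the Hall's-theorem route is more self-contained and is the one I would present.
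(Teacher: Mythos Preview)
Your proposal is correct and follows exactly the approach the paper indicates: the paper does not spell out a proof but states the lemma as ``a simple consequence of Hall's Matching criterion for bipartite graphs,'' which is precisely the argument you give. Your verification of Hall's condition via the double-counting inequality $|S| = \sum_{i\in S}\sum_j a_{ij} \le \sum_{j\in N(S)}\sum_i a_{ij} = |N(S)|$ is the standard one and is complete as written.
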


We will also need the following elementary fact.

\begin{prop}
\label{prop:trivial}   
 Let $G$ be any group, let $S=\{g_1,\ldots, g_m\}=\{h_1,\ldots, h_m\}$ be two orderings of a subset of order $m$, and let $x_i:=g_i^{-1}h_i$ for all $i\in [m]$. Then there exists a permutation $\pi$ of $[m]$ such that $\prod_{i=1}^m x_{\pi(i)}=1$.
\end{prop}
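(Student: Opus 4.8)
\textbf{Proof proposal for Proposition~\ref{prop:trivial}.}

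The plan is to reduce the statement to the existence of a permutation $\sigma$ of $[m]$ such that $\{h_1, \dots, h_m\} = \{g_{\sigma(1)}, \dots, g_{\sigma(m)}\}$ in the \emph{order-matching} sense that $h_i = g_{\sigma(i)}$ for all $i$; then the element $x_{\sigma(i)} = g_{\sigma(i)}^{-1} h_{\sigma(i)}$ will not quite be the identity, so I instead want to build a permutation $\pi$ directly so that the telescoping product $\prod_i x_{\pi(i)}$ collapses. The cleanest route: since $\{g_1, \dots, g_m\}$ and $\{h_1, \dots, h_m\}$ are two orderings of the same set $S$, there is a (unique) permutation $\tau$ of $[m]$ with $h_i = g_{\tau(i)}$ for every $i \in [m]$, so that $x_i = g_i^{-1} g_{\tau(i)}$. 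Now decompose $\tau$ into disjoint cycles. Along a single cycle $(i_1\, i_2\, \cdots\, i_\ell)$ of $\tau$, meaning $\tau(i_1) = i_2$, $\tau(i_2) = i_3, \dots, \tau(i_\ell) = i_1$, the product $x_{i_1} x_{i_2} \cdots x_{i_\ell} = (g_{i_1}^{-1} g_{i_2})(g_{i_2}^{-1} g_{i_3}) \cdots (g_{i_\ell}^{-1} g_{i_1})$ telescopes to the identity.

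First I would fix the permutation $\tau$ defined by $h_i = g_{\tau(i)}$, which exists because the two lists enumerate the same $m$-element set $S$. Next I would observe the telescoping identity above for each cycle of $\tau$: if I list the indices of $[m]$ cycle by cycle, say the cycles are $C_1, \dots, C_r$ and within cycle $C_t = (i^t_1\, i^t_2 \, \cdots\, i^t_{\ell_t})$, then concatenating these orderings gives a permutation $\pi$ of $[m]$, and $\prod_{i=1}^m x_{\pi(i)}$ factors as $\prod_{t=1}^r \big( x_{i^t_1} x_{i^t_2} \cdots x_{i^t_{\ell_t}} \big)$, a product of $r$ copies of the identity, hence the identity. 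This is exactly the desired $\pi$.

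The one point requiring a little care is that group multiplication is noncommutative, so the order in which I list the cycles, and the starting point within each cycle, both matter for the intermediate expression — but since each cycle's contribution is \emph{separately} equal to $1$, the full product is $1$ regardless of how the cycles are concatenated, and regardless of where each cycle is started (cyclically rotating the starting point of a cycle conjugates its telescoping product by some $g_{i^t_k}$, which is still $1$). So there is no real obstacle; the main thing to get right is simply to verify the telescoping computation within a cycle and to note that a cycle of length $1$, i.e.\ a fixed point $\tau(i) = i$, contributes $x_i = g_i^{-1} g_i = 1$, consistent with the general formula. I would write the telescoping product explicitly once for a generic cycle and then assemble $\pi$ from the cycle structure of $\tau$.
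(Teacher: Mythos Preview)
Your proof is correct and is essentially the same argument as the paper's: both trace the permutation $\tau$ defined by $h_i=g_{\tau(i)}$ and use the telescoping identity $x_{i_1}\cdots x_{i_\ell}=g_{i_1}^{-1}g_{i_1}=1$ along each cycle. The only difference is presentational---the paper discovers one cycle at a time via an inductive chase (start at $1$, follow $h_1=g_2$, $h_2=g_3$, \dots\ until the cycle closes, then induct on the remainder), whereas you invoke the cycle decomposition of $\tau$ explicitly and concatenate; your version is arguably cleaner.
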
 

\begin{proof}
We proceed by induction, the case $m=1$ being immediate. For $m\geq 2$, consider $x_1=g_1^{-1}h_1$. If $h_1=g_1$, then we let $S'=\{g_2,\ldots, g_m\}=\{h_2,\ldots, h_m\}$ and we are finished by induction. Supposing $h_1\neq g_1$, consider the unique $j\neq 1$ such that $g_j=h_1$. Without loss of generality, we may suppose $g_2=h_1$. Thus $x_1x_2=g_1^{-1}h_2$. If $g_1=h_2$, then we let $S''=\{g_3,\ldots, g_m\}=\{h_3,\ldots, h_m\}$ and again we are done by induction. If not, we consider the unique $j\neq 1,2$ such that $g_j=h_2$. Again without loss of generality we may suppose $j=3$, in which case we have $x_1x_2x_3=g_1^{-1}h_3$. Continuing in this fashion, we will eventually be done by induction or else, without loss of generality, we have that $h_1=g_2,h_2=g_3, \ldots,h_{m-2}=g_{m-1}$ and $h_m=g_1$. Thus $g_m=h_{m-1}$, so $x_1\cdots x_{m-1}x_m=g_1^{-1}h_{m-1}g_m^{-1}h_m=1$. 
\end{proof}

\begin{proof}[Proof of Theorem~\ref{thm:linear box-complex}]

Let $V$ denote the vertex set of~$H$ and suppose that $c \colon V \to G$ is a $G$-coloring of~$H$ without a zero-sum hyperedge. We will show there must exist a non-vanishing linear equivariant map $B(H)\rightarrow \mathbb{C}^\perp[G]$, where now we view $B(H)$ as a simplicial complex on $V\times G$. To that end, let $\mathcal{Z}$ be the set of all permutations of $G$ and let $Y$ denote the simplicial complex on $G \times G$ defined by specifying that $\sigma\in Y$ for $\sigma\subseteq G\times G$ if and only if $\sigma$ does not contain the graph $\Gamma(z)=\{(g,z(g)) \mid g \in G \}$ of any $z\in \mathcal{Z}$.

 We now define a simplicial map $f \colon B(H) \to Y$ by setting $f(v,g) = (g,g c(v))$ for $(v,g) \in V \times G$ and extending simplicially to the faces of~$B(H)$. First, we verify that our map is well-defined. Letting $\sigma \in B(H)$, we show that $f(\sigma) \subseteq G \times G$ does not contain the graph $\Gamma(z)$ of any $z\in\mathcal{Z}$. So let $\sigma=\cup_{g\in G} (A_g \times \{g\})$ be a face of $B(H)$ and let $z\colon G\rightarrow G$ be a permutation of $G$. By definition, any set $\{a_g\mid g\in G\}$ with $a_g\in A_g$ for all $g\in G$ is a hyperedge of~$H$.  If $\Gamma(z)\subset f(\sigma)$, then there would be some collection $\{a_g\}_{g\in G}$ with $a_g\in A_g$ for all $g\in G$ such that $\{g c(a_g) \colon g\in G\}=\{z(g)\colon i \in G\}$, and therefore $\{g c(a_g)\colon g\in G\}=G$. Fixing an ordering $G=\{g_1,\ldots, g_n\}$ and applying Proposition~\ref{prop:trivial} to $S=G$ and $h_i=g_ic(a_{g_i})$, we see there is some permutation $\psi$ of $[n]$ such that $\Pi_{i=1}^n c(a_{g_{\psi(i)}})=1$. This contradicts the assumption that $H$ does not contain a zero-sum hyperedge, so $\sigma$ does not contain the graph of any $z\in \mathcal{Z}$ and the map $f$ is well-defined.

As with the box complex $B(H)$, the simplicial complex $Y$ has a natural $G$-action arising from group multiplication, namely by letting $G$ act diagonally on the vertex set $G\times G$ of $Y$ and extending this action to each face of $Y$. It is then easily observed that the simplicial map $f\colon B(H)\rightarrow Y$ is equivariant with respect to the described actions. 

 We will now construct a never-vanishing linear $G$-equivariant map $Y \to \mathbb{C}^\perp[G]$. Composition then gives a never-vanishing linear equivariant map $B(H) \to Y \to \mathbb{C}^\perp[G]$, completing the proof. To that end, first consider the real regular representation $\R[G]=\{\sum_{g\in G} r_g\, g\mid r_g\in \R\}$ and let $h \colon Y \to \R[G] \times \R[G]$ be the map that is defined on vertices $(g_i,g_\ell) \in G \times G$ by $h(g_i,g_\ell) = (g_i, g_\ell)$ and otherwise interpolates linearly, which is clearly $G$-equivariant. We observe that the image of $h$ is contained in~$U$, the real $(2n-2)$-dimensional affine subspace of  $\R[G] \times \R[G]$ for which both the first and last $n$ coordinates sum to~$1$. We observe that $U$ is invariant under the $G$-action on $\R[G]\times \R[G]$. Geometrically, this action simultaneously cyclically permutes the vertices of two regular $(n-1)$-simplices lying in orthogonal $(n-1)$-dimensional subspaces. 

We now claim that $(\sum_{g\in G} \tfrac{1}{n}\,g, \sum_{g\in G} \tfrac{1}{n}\,g)$ does not lie in the image of~$h$. This claim completes the proof, since the translated function $h-(\sum_{g\in G} \tfrac{1}{n}\,g, \sum_{g\in G} \tfrac{1}{n}\,g)$ thereby defines a non-vanishing linear equivariant linear map from $Y$ to $\mathbb{C}^\perp[G]$. For contradiction, assume that there is some point $y \in Y$ with $h(y)=(\sum_{g\in G} \tfrac{1}{n}\,g, \sum_{g\in G} \tfrac{1}{n}\,g)$. The point $y$ is a convex combination of some collection of $(g_i, g_\ell)\in G\times G$, say $y = \sum_{i,\ell} \lambda_{i, \ell}\,(g_i, g_\ell)$. Since $h(y)=(\sum_i \lambda_{i, \ell} g_i, \sum_\ell \lambda_{i, \ell}g_\ell)=(\sum_{g\in G} \tfrac{1}{n}\,g, \sum_{g\in G} \tfrac{1}{n}\,g)$, the matrix $(n\cdot \lambda_{i, \ell})_{i, \ell} \in \R^{n\times n}$ is doubly stochastic. By Lemma~\ref{lem:stochastic}, there is therefore some permutation $\pi \colon [n] \to [n]$ such that $\lambda_{i, \pi(i)} > 0$ for all $i\in [n]$. As $Y$ is a simplicial complex, it must therefore contain the subset $\sigma = \{(g_i, g_{\pi(i)}) \mid i \in [n]\}$, which is the graph of a permutation of $G$, contradicting the definition of our complex $Y$. Thus the image of $h$ cannot contain $(\sum_{g\in G} \tfrac{1}{n}\,g, \sum_{g\in G} \tfrac{1}{n}\,g)$, finishing the proof. \end{proof}

We now remark on the relationship between Theorem~\ref{thm:linear box-complex} and Theorem~\ref{thm:box-complex}, as well as the distinction between chromatic number of a hypergraph and the zero-sum condition given by Theorem~\ref{thm:box-complex}. 

\begin{proof}[Proof of Theorem~\ref{thm:box-complex}]
    When $G=\mathbb{Z}/n$ is cyclic, the $\Z/n$-action on $\mathbb{C}^\perp[\Z/n]$ recovers the action on $\mathbb{C}^{n-1}$ and  $S(\mathbb{C}^\perp[\Z/n])\cong S^{2n-3}$ described in the introduction. The action on the latter is free if and only if $n=p$ is prime.
    
        Let $H$ be a $p$-uniform hypergraph such that there is a $\Z/p$-coloring of $H$ without any zero-sum hyperedge. By Theorem~\ref{thm:linear box-complex} there is a linear non-vanishing $\Z/p$-equivariant map $B(H) \to \mathbb{C}^\perp[\Z/p]$, and thus there is a $\Z/p$-equivariant map $B(H) \to S^{2p-3}$, which establishes the contrapositive of Theorem~\ref{thm:box-complex}.
\end{proof}

\begin{rem}
\label{rem:cyclic} 
 If $H$ is the complete $n$-uniform hypergraph on~$[2n-1]$, so that $B(H)=(\Delta_{2n-2})^{\ast n}_\Delta$, basic equivariant obstruction theory calculations guarantee the existence of a never-vanishing $G$-equivariant \emph{continuous} map $B(H) \rightarrow \mathbb{C}^\perp[G]$ (and therefore $G$-equivariant continuous maps $B(H)\rightarrow S(\mathbb{C}^\perp[G])\cong S^{2n-3}$) whenever $G$ is neither elementary abelian group nor isomorphic to $\Z/4$ (see, e.g.,~\cite[Remark 2.9]{Sar00}). On the other hand, any \emph{linear} equivariant map must have a zero by Theorem~\ref{thm:Sarkaria}. 
\end{rem}

\begin{rem}
\label{rem:chromatic} 
For $p$-uniform hypergraphs $H$, it follows from work of Kr\'i\v z~\cite[Theorems~2.2 and~2.6]{Kri92} 
that $\chi(H) \geq \frac{d+1}{p-1}$, where $d$ is the minimum dimension of a sphere $S^d$ equipped with a free $\Z/p$-action for which there exists a continuous $\Z/p$-equivariant map $B(H)\rightarrow S^d$. Thus the condition $d\geq 2p-2$ given by Theorem~\ref{thm:box-complex} coincides with Kr\'i\v z's criterion for $\chi(H)\geq 3$. However, the topological condition ensuring a zero-sum hyperedge given by Theorem~\ref{thm:box-complex} is stronger than the combinatorial condition that the hypergraph is not two-colorable. As a simple example, let $p=3$ and let $H$ be the 3-uniform hypergraph whose vertex set is $V = \{v_0,v_1,v_2,v_3,v_4,v_5,v_6\}$ and whose hyperedges consist of all subsets of three vertices from $V$ where exactly two vertices come from either $A = \{v_1,v_2,v_3\}$ or $B = \{v_4,v_5,v_6\}$. It is easy to see that $\chi(H) = 3$, but there is no zero-sum hyperedge corresponding to the coloring $c\colon V\to \Z/3$ defined by  $c(v_i) = 0$ for $i\in\{1,2,3\}$, $c(v_i) = 1$ for $i\in\{4,5,6\}$, and $c(v_0) = 2$. 
\end{rem}

\begin{rem}
\label{rem:Caro} 
Let $G$ be a graph with $k$ edges. Following Caro's survey on zero-sum Ramsey theory~\cite{Car96}, we denote by $R(G,2)$ the Ramsey number of~$G$, that is the smallest $n$ such that in any $2$-coloring of the edges of the complete graph~$K_n$, there is a monochromatic copy of~$G$. We let $H_n$ be the $k$-uniform hypergraph whose vertex set consists of the edges of $K_n$, with hyperedges for any $k$ edges of $K_n$ that form a copy of~$G$. Thus $R(G,2)$ is the smallest~$n$ such that $H_n$ has chromatic number at least~$3$. Now denote by $R(G,\Z/k)$ the zero-sum Ramsey number of~$G$, that is, the smallest~$n$ such that every $\Z/k$-coloring of $H_n$ has a zero-sum hyperedge. It is known that $R(G,\Z/k) \ge R(G,2)$, and that this inequality is often strict as in Remark~\ref{rem:chromatic} above. Clearly, $R(G,\Z/k) \le R(G,k)$, where $R(G,k)$ is the smallest $n$ with $\chi(H_n) > k$. While it is an open problem whether the asymptotics of $R(G,\Z/k)$ are aligned more closely with $R(G,2)$ or with~$R(G,k)$,  Theorem~\ref{thm:box-complex} does show that the \emph{topological} lower bounds on $R(G,\Z/k)$ and $R(G,2)$ are identical.
\end{rem}

\section{Proof of Theorem~\ref{thm:cd}} 
\label{sec:cd}

We now prove Theorem~\ref{thm:cd}. While this follows from Theorem~\ref{thm:box-complex} together with the work of Kr\'i\v z~\cite{Kri92}, Kr\'i\v z's construction uses a different (albeit related) notion of a box complex than our own. Instead of adapting Kr\i \v z's construction, we shall instead give a Tverberg-type intersection theory argument that allows us to obtain Theorem~\ref{thm:cd} directly from Theorems~\ref{thm:box-complex} and ~\ref{thm:topological-delted}; see~\cite{Fri20} for similar techniques. We refer the reader to Matou\v sek and Ziegler~\cite{MZ04} for an exposition of the various notions of box complexes and how they relate in the case~$p=2$. 

We give some details on the difference between Kr\i \v z's construction and our box complex. Let $H$ be a $p$-uniform hypergraph on~$V$ and denote by $B_{\mathrm{chain}}(H)$ the partially ordered set on $V \times \Z/p$, where $A_0 \times \{0\} \cup \dots \cup A_{p-1} \times \{p-1\}$ is in $B_{\mathrm{chain}}(H)$ if all the $A_i$ are nonempty and if for all $a_0 \in A_0, \dots, a_{p-1} \in A_{p-1}$ one has that $\{a_0, \dots, a_{p-1}\} \in H$. Thus $B_{\mathrm{chain}}(H)$ differs from $B(H)$ only in that it excludes those $A_0 \times \{0\} \cup \dots \cup A_{p-1} \times \{p-1\}$ where some of the $A_i$ are empty. While the set $B_{\mathrm{chain}}(H)$ is not closed under taking subsets and is therefore not in itself a simplicial complex, as a poset it has an associated order complex $C(H)$; this is the simplicial complex which Kr\' i\v z considers. We observe that this complex $C(H)$ is a subcomplex of the barycentric subdivision of our box complex ~$B(H)$; we will use this later geometric version of $B_{\mathrm{chain}}(H)$ in our proof.

\begin{lem}
\label{lem:cd-box-complex}
   Let $p \ge 2$ be a prime, let $m\geq 2$ be an integer, and suppose that $\Z/p$ acts freely on the sphere $S^{m-2}$. Suppose that $\mathcal F$ is a set system that is upwards-closed, that is, $A' \in \mathcal F$ whenever $A \in \mathcal F$ and $A' \supset A$.  
 If $\mathrm{cd}^p(\mathcal F) \ge m$, then there is no $\Z/p$-equivariant map $B(\mathrm{KG}^p(\mathcal F)) \to S^{m-2}$.
\end{lem}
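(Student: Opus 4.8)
\textbf{Proof proposal for Lemma~\ref{lem:cd-box-complex}.}

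The plan is to argue by contrapositive: assuming a $\Z/p$-equivariant map $B(\mathrm{KG}^p(\mathcal F)) \to S^{m-2}$ exists, I will construct a $\Z/p$-equivariant map $(\Delta_{m-1})^{\ast p}_\Delta \to S^{m-2}$, which is forbidden by Theorem~\ref{thm:topological-delted} once we know $S^{m-2}$ carries a free $\Z/p$-action (which is hypothesized; note $S(\mathbb C^\perp[\Z/p]) \cong S^{2p-3}$ is one instance, but the statement allows any such sphere). The key geometric input is the definition of the $p$-colorability defect: unwinding the quantifier, $\mathrm{cd}^p(\mathcal F) \ge m$ says that for \emph{every} choice of pairwise disjoint $A_1, \dots, A_p \subset [m']$ (where $m'$ is the ground set size of $\mathcal F$) with $\sum |A_i| > m' - m$, at least one $A_i$ must contain some member of $\mathcal F$. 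Equivalently: whenever $A_1, \dots, A_p$ are pairwise disjoint and \emph{none} of them contains a set of $\mathcal F$, then $\sum|A_i| \le m' - m$, i.e., the complement $[m'] \setminus (A_1 \cup \dots \cup A_p)$ has at least $m$ elements.

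The main construction is then as follows. Pick an $m$-element subset $W \subseteq [m']$ and consider the deleted join $(\Delta_{W})^{\ast p}_\Delta$ on vertex set $W \times \Z/p$; this is equivariantly isomorphic to $(\Delta_{m-1})^{\ast p}_\Delta$. I want a $\Z/p$-equivariant simplicial map $(\Delta_W)^{\ast p}_\Delta \to C(H)$ (or into $B(\mathrm{KG}^p(\mathcal F))$ directly, using the geometric version of $B_{\mathrm{chain}}$ described in the excerpt), where $H = \mathrm{KG}^p(\mathcal F)$. Given a face $\bigcup_{i \in \Z/p}(S_i \times \{i\})$ of $(\Delta_W)^{\ast p}_\Delta$ with the $S_i \subseteq W$ pairwise disjoint, I send it to the face $\bigcup_{i}(\mathcal F|_{T_i} \times \{i\})$ of $B(H)$, where $T_i = ([m'] \setminus W) \cup S_i$ and $\mathcal F|_{T_i}$ denotes the members of $\mathcal F$ contained in $T_i$ — no wait, I need to be more careful, since I need the $\mathcal F|_{T_i}$ to actually be vertices of $\mathrm{KG}^p(\mathcal F)$ forming a hyperedge, i.e., pairwise-disjoint sets in $\mathcal F$, and I need to land in a face, so really I should assign to each $i$ a single set $G_i \in \mathcal F$ (or a simplex of such). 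The cleaner route: for a face of the deleted join corresponding to pairwise disjoint $S_i$, the sets $S_i^c := [m'] \setminus \bigcup_{j \ne i} S_j$ pairwise intersect only in $[m'] \setminus \bigcup_j S_j$, and one checks that picking for each $i$ the collection of all $F \in \mathcal F$ with $F \subseteq S_i \cup ([m'] \setminus \bigcup_j S_j)$ gives a face of $B(H)$ that is nonempty precisely when not all the "holes" are covered — and the colorability defect hypothesis is exactly what guarantees this collection is nonempty, so the map is well-defined into the geometric $B_{\mathrm{chain}}$. Composing with the assumed equivariant map $B(H) \to S^{m-2}$ and with the equivariant isomorphism $(\Delta_{m-1})^{\ast p}_\Delta \cong (\Delta_W)^{\ast p}_\Delta$ yields the contradiction with Theorem~\ref{thm:topological-delted}.

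The hardest part, and the step I would spend the most care on, is pinning down the simplicial map from the deleted join of a simplex into the box complex and verifying both that it is well-defined (every face of the source maps into an actual face of $B(\mathrm{KG}^p(\mathcal F))$ — this is exactly where $\mathrm{cd}^p(\mathcal F) \ge m$ enters, ensuring the relevant sub-collections of $\mathcal F$ are nonempty so the image face contains a vertex in each $\Z/p$-layer) and that it is $\Z/p$-equivariant (which should be automatic from the symmetric way the $\Z/p$-index is treated, since cyclically shifting the $S_i$ cyclically shifts the assigned collections). The upward-closedness of $\mathcal F$ is presumably used to guarantee that the assigned collections are genuinely faces of $B(\mathrm{KG}^p(\mathcal F))$ — recall a face of $B(H)$ requires that \emph{all} transversals $\{a_0, \dots, a_{p-1}\}$ with $a_i \in A_i$ are hyperedges, i.e., pairwise disjoint members of $\mathcal F$; upward-closure lets us enlarge sets to make disjointness work or to ensure the collections $\mathcal F|_{T_i}$ behave like full simplices. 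I would also double-check the bookkeeping relating $m$ (the connectivity/defect parameter) to the dimension $m-1$ of the simplex whose $p$-fold deleted join we use, since Theorem~\ref{thm:topological-delted} requires the deleted join $(\Delta_{m-1})^{\ast p}_\Delta$ of an $(m-1)$-simplex mapping to a sphere $S(U)$ with $\dim U = m-1$, i.e., $S^{m-2}$, which matches the statement.
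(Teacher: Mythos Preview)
Your overall contrapositive strategy is right, but the simplicial map you sketch from $(\Delta_W)^{\ast p}_\Delta$ (or its barycentric subdivision) into $B(\mathrm{KG}^p(\mathcal F))$ is not well-defined. Recall that a face $\bigcup_i (A_i\times\{i\})$ of $B(\mathrm{KG}^p(\mathcal F))$ requires that \emph{every} transversal $(F_0,\dots,F_{p-1})$ with $F_i\in A_i$ consist of pairwise disjoint sets. In your ``cleaner route'' you set $A_i=\{F\in\mathcal F: F\subseteq S_i\cup R\}$ where $R=[m']\setminus\bigcup_j S_j$; but then every $A_i$ contains every $F\in\mathcal F$ with $F\subseteq R$, so for $i\ne j$ the collections $A_i$ and $A_j$ can share (hence contain non-disjoint) members. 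Concretely, take $p=2$, $m=3$, ground set $[5]$, $\mathcal F=\{F\subseteq[5]:|F|\ge 2\}$ (upward-closed, $\mathrm{cd}^2(\mathcal F)=3$), $W=\{1,2,3\}$, $S_0=\{1\}$, $S_1=\{2\}$: then $R\supseteq\{4,5\}$ and $\{4,5\}\in A_0\cap A_1$, so $(A_0,A_1)$ is not a face of $B(\mathrm{KG}^2(\mathcal F))$. A vertex-level simplicial map fails for the same reason: it would require $m$ pairwise disjoint members of~$\mathcal F$, which in this example do not exist. The hypothesis $\mathrm{cd}^p(\mathcal F)\ge m$ guarantees your $A_i$ are nonempty, but not that their members are disjoint across layers.

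The paper resolves this by \emph{not} mapping directly into $B(\mathrm{KG}^p(\mathcal F))$. Instead it works with a large simplex $\Delta_{N-1}$ (with $N=(p-1)(d+1)+m$) and maps $\mathrm{sd}((\Delta_{N-1})^{\ast p}_\Delta)$ into a \emph{join} $\mathrm{sd}(\Sigma^{\ast p}_\Delta)\ast B(\mathrm{KG}^p(\mathcal F))$, where $\Sigma=\{\sigma\subseteq[N]:\sigma\notin\mathcal F\}$. A vertex of the subdivision is a $p$-tuple $(A_1,\dots,A_p)$ of pairwise disjoint subsets of~$[N]$; if no $A_i\in\mathcal F$ it goes to the $\Sigma$-side, and otherwise one sends the tuple of those $A_i$ that lie in~$\mathcal F$ (this is exactly where upward-closedness is used) as \emph{single} vertices $(A_i,i)$ of the box complex --- disjointness is then automatic from the deleted-join structure. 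The colorability defect enters not to make the box-complex part nonempty, but to bound the total size of tuples on the $\Sigma$-side, which yields (via a generic map in the Tverberg style) an equivariant map $\Sigma^{\ast p}_\Delta\to S^{(p-1)(d+1)-1}$. Joining with the assumed $B(\mathrm{KG}^p(\mathcal F))\to S^{m-2}$ produces the forbidden map $(\Delta_{N-1})^{\ast p}_\Delta\to S^{N-2}$. The missing idea in your proposal is precisely this second ``absorbing'' factor.
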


\begin{proof}

Assume that the ground set of $\mathcal F$ is~$[n]$. We let $N=(p-1)(d+1)+m$, where $d\ge 0$ is chosen to be the integer such that $N\ge n$. We let $\Sigma$ be the simplicial complex on vertex set~$[N]$ defined by specifying that $\sigma \subseteq [N]$ is a face of $\Sigma$ if and only if $\sigma\notin \mathcal F$. As $\mathcal F$ is upward-closed, $\Sigma$ is downward-closed and therefore a simplicial complex. By definition of the colorability defect $\mathrm{cd}^p(\mathcal F)$, a $p$-tuple of pairwise disjoint faces of $\Sigma$ can involve at most $N-m = (p-1)(d+1)$ vertices of~$[N]$. 

We show that there exists a $\Z/p$-equivariant map $\widetilde h \colon \Sigma^{*p}_\Delta \to S^{(p-1)(d+1)-1}$ by dimension considerations. For this, let $f\colon \Sigma \to \R^d$ be a linear map that places the vertices of $\Sigma$ in generic position and interpolates affinely on the faces of~$\Sigma$. The technical notion of generiticity here, known as ``strong general position''~\cite{PS14}, is that $\bigcap_i f(\sigma_i)$ has codimension at least $\sum_i d-\dim f(\sigma_i)$ for any collection $\{\sigma_1, \dots, \sigma_p\}$ of $p$ pairwise disjoint faces of~$\Sigma$ (this is indeed generic; see~\cite{PS14}). It follows that the intersection $\bigcap_i f(\sigma_i)$ is empty for any collection of $p$ pairwise disjoint faces of~$\Sigma$, or in other words that $f$ never maps $p$ points from pairwise disjoint faces of~$\Sigma$ to the same point of~$\R^d$. Indeed, by generiticity one has $\dim f(\sigma_i) = |\sigma_i|-1$ for each $i$, and since the union of the $\sigma_i$ involves at most $(p-1)(d+1)$ vertices in all we have $\sum_i d-\dim f(\sigma_i) \ge pd - (p+1)(d+1)+p = d+1$. Now consider the map $h\colon \Sigma^{*p}_\Delta \to (\R^{d+1})^p$ defined by 
    \[
        h(\lambda_1x_1 + \dots + \lambda_px_p) = (\lambda_1, \lambda_1f(x_1), \dots, \lambda_p, \lambda_pf(x_p)).
    \]
    It follows that the image of $h$ must avoid the diagonal subset $D = \{(y_1, \dots, y_p) \in (\R^{d+1})^p \mid y_1 = y_2 = \dots = y_p\}$ of $(\R^{d+1})^p$. Indeed, if $h(\lambda_1x_1 + \dots + \lambda_px_p) \in D$ for some $\lambda_1x_1+\dots +\lambda_px_p\in  \Sigma^{*p}_\Delta$, then it follows immediately that $\lambda_1 = \lambda_2 = \dots =\frac{1}{p}$ and $f(x_1) = f(x_2) = \dots = f(x_p)$. However, the generic map $f$ cannot map $p$ points from pairwise disjoint faces to the same point, so $h$ misses~$D$. The map $h$ is $\Z/p$-equivariant if one lets $\Z/p$ act linearly on $(\R^{d+1})^p$ by cyclically permuting each $\R^{d+1}$ factor. The set $D$ is precisely the fixed point set under the action, so projecting to the orthogonal complement $D^\perp$ (which is $\Z/p$-invariant), and retracting radially to the unit sphere~$S(D^\perp)$ results in a $\Z/p$-equivariant map $\widetilde h \colon \Sigma^{*p}_\Delta \to S^{(p-1)(d+1)-1}$. 

    Now let $\mathrm{sd} ((\Delta_{N-1})^{*p}_\Delta)$ denote the barycentric subdivision of the simplicial complex $(\Delta_{N-1})^{*p}_\Delta$. We shall construct a $\Z/p$-equivariant simplicial map
    \[
        \Phi \colon \mathrm{sd} ((\Delta_{N-1})^{*p}_\Delta) \to \mathrm{sd}(\Sigma^{*p}_\Delta) * B(\mathrm{KG}^p(\mathcal F)).
    \]
    To that end, let $v$ be a vertex of $\mathrm{sd} ((\Delta_{N-1})^{*p}_\Delta)$. This vertex uniquely corresponds to a face of the complex ~$(\Delta_{N-1})^{*p}_\Delta$, or in other words to a $p$-tuple $(A_1, \dots, A_p)$ of pairwise disjoint subsets of~$[N]$. If none of the $A_i$ are in $\mathcal F$, then each $A_i$ determines a face of~$\Sigma$ and we define $\Phi(v)\in \mathrm{sd}(\Sigma^{*p}_\Delta)$ be the vertex of the barycentric subdivision of $\Sigma^{*p}_\Delta$ that corresponds to the face $A_1 * \dots * A_p$. On the other hand, suppose that  $A_i\in \mathcal F$ for at least one $i\in [p]$. Then we set $A_i=B_i$ for any such $i\in [p]$, and we set $B_i=\emptyset$ for any $i\in [p]$ with $A_i\notin \mathcal F$. We then define $\Phi(v)$ to be the vertex of the box complex $B(\mathrm{KG}^p(\mathcal F))$ that corresponds to $(B_1, \dots, B_p)$. It is now easily verified that $\Phi$ is $\Z/p$-equivariant. 

   To complete the proof, suppose that  $\Z/p$ acts freely on $S^{m-2}$ and that there is some $\Z/p$-equivariant map $B(\mathrm{KG}^p(\mathcal F)) \to S^{m-2}$. Joining this map with $\widetilde h$ above then gives a $\Z/p$-equivariant map $\Sigma^{*p}_\Delta * B(\mathrm{KG}^p(\mathcal F)) \to S^{(p-1)(d+1)-1} * S^{m-2} \cong S^{N-2}$, and composition of this map with $\Phi$ thereby yields a $\Z/p$-equivariant map $(\Delta_{N-1})^{*p}_\Delta \to S^{N-2}$. Noting that the $\Z/p$-action on the join sphere $S^{N-2}$ is free, we have reached a contradiction with Theorem~\ref{thm:topological-delted}. Thus there is no $\Z/p$-equivariant map $B(\mathrm{KG}^p(\mathcal F)) \to S^{m-2}$. 
\end{proof}

Theorem~\ref{thm:cd} now follows immediately from Lemma~\ref{lem:cd-box-complex} and Theorem~\ref{thm:box-complex} when $p$ is prime, and for arbitrary integers $n$ by a standard induction on prime divisors as in~\cite{AFL86}, ~\cite{Kri00}, and the original proof of the EGZ theorem~\cite{EGZ61}. 

\begin{proof}[Proof of Theorem~\ref{thm:cd}]

 First, let $p$ be prime. We note that we may assume that $\mathcal F$ is upward-closed. This is because if $\mathcal G$ is the set family obtained from $\mathcal F$ by including any supersets of the $A\in \mathcal F$, then it is immediate from the definition of colorability defect that $\mathrm{cd}^p(\mathcal G)\geq \mathrm{cd}^p(\mathcal F)\geq 2n-1$. Given a coloring $c\colon \mathcal F\rightarrow \Z/p$, we greedily extend it to a $\Z/p$-coloring $\widetilde{c}\colon \mathcal G\rightarrow \Z/p$. Namely, if $A_1,\ldots, A_k$ are the inclusion-maximal elements of $\mathcal{F}$, then we set $c(A_1')=A_1$ for all supersets $A_1'\supset A_1$, $c(A_2')=C(A_2)$ for all all supersets $A_2'\supset A_2$ which are not supersets of $A_1$, and so on. Considering the $\Z/p$-action on $\mathbb{C}^\perp[\mathbb{Z}/p]$ and letting $m=2p-1$ in Lemma~\ref{lem:cd-box-complex}, we have $\mathrm{cd}^p(\mathcal G)\geq 2p-1$ and so by Lemma~\ref{lem:cd-box-complex} there is no $\Z/p$-equivariant map $B(\mathrm{KG}^p(\mathcal G))\rightarrow S^{2p-3}$. Thus  $\mathrm{KG}^p(\mathcal G)$ contains a zero-sum hyperedge $\{A_1',\ldots, A_p'\}$ by Theorem~\ref{thm:box-complex}, and therefore $\mathrm{KG}^p(\mathcal F)$  contains a hyperedge $\{A_1,\ldots, A_p\}$.

For general~$n$, let $\mathcal F$ be a set system on $[m]$ with $\mathrm{cd}^n(\mathcal F)\ge 2n-1$. given any integer-valued function $c\colon \mathcal F \to \Z$, we show that there are pairwise disjoint $A_1, \dots, A_n \in \mathcal F$ such that $n$ divides $\sum_{i=1}^n c(A_i)$. As in~\cite{EGZ61}, we induct on the number of prime divisors of~$n$ and so we may assume that $n = pq$, where $p$ is a prime and $q \ge 2$ is an integer. We now follow the same reasoning as given by  Kr\'i\v z~\cite{Kri00}:  Defining $\Gamma = \{E \subseteq [m]  \mid  \mathrm{cd}^q(\mathcal F|_E) \ge 2q-1\}$, Kr\'i\v z shows that the assumption that $\mathrm{cd}^n(\mathcal F)\ge 2n-1$ implies that $\mathrm{cd}^p(\Gamma)\ge 2p-1$. Now let $E\in \Gamma$ be arbitrary. By induction, any $\Z/q$-coloring of $\mathrm{KG}^q(\mathcal F|_E)$ has a zero-sum hyperedge, that is, there are pairwise disjoint sets $A_{E,1}, \dots, A_{E,q} \in \mathcal F|_E$ such that $q$ divides $\sum_{i=1}^q c(A_{E,i})$. Defining $\widehat c \colon \Gamma \to \Z$ by $\widehat c(E) = \sum_{i=1}^q c(A_{E,i})$ and using the fact that $\mathrm{cd}^p(\Gamma)\ge 2p-1$ shows that there are pairwise disjoint $E_1, \dots, E_p \in \Gamma$ such that $p$ divides $\sum_{i=1}^p \widehat c(E_i)$. We therefore have that $n$ divides $\sum_{i=1}^p \sum_{j=1}^q c(A_{E_i,j})$, so that $\{A_{E_i,j}  \mid  i \in [p], j \in [q]\}$ is the desired zero-sum hyperedge. 
\end{proof}

\section{Zero-sums via the topology of chessboard complexes}
\label{sec:chessboard}

\subsection{Chessboard Complexes}

Our fractional generalization of the EGZ theorem and our proof of Hall's characterization of zero-sum sequences both rely crucially on the topological properties of chessboard complexes, a simplicial complex which encodes the non-attacking rook placements on an $m\times n$ chessboard. These complexes have seen extensive application to geometric combinatorics, most notably in the context of colorful extensions of Tverberg's theorem (see, e.g.,~\cite{BL92, ZV92, VZ11, BMZ15}).

\begin{defn}
[Chessboard Complex]
\label{def:chessboard}
 For positive integers $n$ and $m$, the \textbf{chessboard complex}~$\Delta_{m,n}$ is the simplicial complex where $\sigma \subseteq [m] \times [n]$ is a face of $\Delta_{m,n}$ if and only if  $(i_1, j_1),(i_2,j_2) \in \sigma$ whenever $i_1\neq i_2$ and $j_1\neq j_2$. 
\end{defn}

As an abstract simplicial complex, $\Delta_{m,n}$ consists of all matchings of the complete bipartite graph $K_{m,n}$. We also note that chessboard complexes can also be realized via the deleted join construction discussed in Section~\ref{sec:background}; indeed $\Delta_{m,n}\cong [m]^{\ast n}_{\Delta}$. For any chessboard complex $\Delta_{m,n}$, one has a $\Z/m$-action given by permuting the rows of the chessboard, as well as a free $\Z/n$-action obtained by permuting the columns.

\begin{ex}
    Consider the complex~$\Delta_{2,3}$, whose faces are 
    \begin{align*}
        &\{\{(1,1)\},\{(1,2)\},\{(1,3)\},\{(2,1)\},\{(2,2)\},\{(2,3)\}, \\
        &\{(1,1),(2,2)\},\{(1,1),(2,3)\},\{(1,2),(2,1)\},\{(1,2),(2,3)\}, \\
        &\{(1,3),(2,1)\},\{(1,3),(2,2)\}\}.
    \end{align*} 
  The geometric realization of $\Delta_{2,3}\cong [2]^{\ast 3}_\Delta\cong [3]^{\ast 2}$ is a $6$-cycle and so is homeomorphic to the circle~$S^1$.
    
    \[\begin{tikzcd}
	{\bullet^{1,1}} & {\bullet^{1,2}} & {\bullet^{1,3}} &&& {\bullet^{1,1}} & {\bullet^{2,3}} & {\bullet^{1,2}} \\
	{\bullet^{2,1}} & {\bullet^{2,2}} & {\bullet^{2,3}} &&& {\bullet^{2,2}} & {\bullet^{1,3}} & {\bullet^{2,1}}
	\arrow[no head, from=1-1, to=2-2]
	\arrow[no head, from=2-2, to=1-3]
	\arrow[no head, from=1-2, to=2-1]
	\arrow[no head, from=1-2, to=2-3]
	\arrow[no head, from=1-3, to=2-1]
	\arrow[no head, from=1-1, to=2-3]
	\arrow[no head, from=2-6, to=1-6]
	\arrow[no head, from=1-6, to=1-7]
	\arrow[no head, from=1-7, to=1-8]
	\arrow[no head, from=1-8, to=2-8]
	\arrow[no head, from=2-8, to=2-7]
	\arrow[no head, from=2-7, to=2-6]\\
\end{tikzcd}\]
\end{ex}

\subsection{A Topological Proof of Hall's Zero-Sum Criterion} 

Our proof of Theorem~\ref{thm:Hall} relies on the fact (see~\cite{BMZ15,VZ11}) that any chessboard complex of the form $\Delta_{n-1,n}$ is a $(n-2)$-dimensional orientable pseudomanifold when $n\geq 3$. In what follows, we shall not need the technical definition of such simplicial complexes, only that this property allows for a well-defined extension to pseudomanifolds of classical degree theory for maps between compact orientable manifolds (see, e.g.,~\cite{Ha00,OR09}). Considering the $\Z/n$ action on $\Delta_{n-1,n}$ induced by permuting the columns of an $(n-1)\times n$ chessboard, known results immediately give the following lemma for continuous mappings $\Delta_{p-1,p}\rightarrow \partial \Delta_{p-1}$ to the boundary complex $\partial \Delta_{p-1}$ of the $(p-1)$-simplex (which is a topological $(p-2)$-dimensional sphere).  

\begin{lem} 
\label{lem:degree}
If $p\geq 3$ is prime, then any continuous $\Z/p$-equivariant mapping $f\colon \Delta_{p-1,p}\rightarrow \partial \Delta_{p-1}$ is surjective.  
\end{lem}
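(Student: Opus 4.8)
The plan is to deduce surjectivity from a degree (or parity-of-preimages) argument for equivariant maps between the relevant pseudomanifolds. First I would recall the structural facts furnished by the excerpt: $\Delta_{p-1,p}$ is a compact connected orientable $(p-2)$-dimensional pseudomanifold (for $p\ge 3$), it carries a free $\Z/p$-action permuting columns, and $\partial\Delta_{p-1}$ is a $(p-2)$-sphere. Moreover $\partial\Delta_{p-1}$ carries a natural $\Z/p$-action cyclically permuting its $p$ vertices; under the standard identification $\partial\Delta_{p-1}\cong S(\mathbb R^\perp[\Z/p])$ this action is free since $p$ is prime. A continuous $\Z/p$-equivariant map $f\colon \Delta_{p-1,p}\to\partial\Delta_{p-1}$ between such pseudomanifolds has a well-defined degree, and the goal is to show that this degree is nonzero, which forces $f$ to be surjective (a map of pseudomanifolds of the same dimension that misses a point factors through a contractible complement and hence has degree zero).

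Second, I would compute the degree modulo $p$. The key point is equivariance: for a $\Z/p$-equivariant map between free $\Z/p$-pseudomanifolds, the preimage $f^{-1}(y)$ of a regular value $y$ is $\Z/p$-invariant if $y$ has trivial stabilizer — but since the action on the target is free, $f^{-1}(\Z/p\cdot y)$ is a single free orbit of preimage-fibers and one concludes that $\deg f$ is determined modulo $p$ by local data at one point. Concretely, I would pick a convenient vertex of $\partial\Delta_{p-1}$ as a point to analyze, or better, take a generic point in the interior of a facet, count its $f$-preimages with sign, and use that the $\Z/p$-action permutes the $p$ facets of $\partial\Delta_{p-1}$ freely and transitively while permuting the top-dimensional faces of $\Delta_{p-1,p}$ freely — so the number of top faces of $\Delta_{p-1,p}$ is divisible by $p$, and a parity/counting argument over the free orbits shows $\deg f\not\equiv 0\pmod p$ provided one establishes the base case, i.e. that \emph{some} equivariant map (e.g. a concrete simplicial one, or the standard inclusion-type map) has degree coprime to $p$. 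The cleanest route is: (i) exhibit one explicit equivariant map $g\colon\Delta_{p-1,p}\to\partial\Delta_{p-1}$ and show $\deg g\equiv \pm 1\pmod p$ by direct inspection of a single fiber; (ii) observe that any two equivariant maps to a free $\Z/p$-sphere of this dimension are equivariantly homotopic, or at least have degrees congruent mod $p$, because the equivariant obstruction/degree lives in $H^{p-2}_{\Z/p}$-type group where the difference of degrees is a multiple of $p$ (this is the standard fact that $\deg$ of equivariant self-maps of free $\Z/p$-spheres is well-defined mod $p$). Then $\deg f\equiv\deg g\not\equiv 0\pmod p$, so $\deg f\ne 0$ and $f$ is surjective.

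The main obstacle I anticipate is making the base-case computation fully rigorous: producing one concrete $\Z/p$-equivariant simplicial map $\Delta_{p-1,p}\to\partial\Delta_{p-1}$ and correctly counting the signed preimages of a regular point to see the count is $\pm1\bmod p$. The natural candidate sends a rook at position $(i,j)$ (row $i\in[p-1]$, column $j\in\Z/p$) to the vertex $j$ of $\partial\Delta_{p-1}$ — this is manifestly $\Z/p$-equivariant, and one must check it is simplicial into $\partial\Delta_{p-1}$ (a facet of $\Delta_{p-1,p}$ is a perfect matching of the $p-1$ rows into $p$ columns, so it uses only $p-1$ distinct columns, i.e. misses one, hence its image is a proper face of $\Delta_{p-1}$, landing in $\partial\Delta_{p-1}$). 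The facet of $\Delta_{p-1,p}$ missing column $j_0$ maps onto the facet of $\partial\Delta_{p-1}$ opposite vertex $j_0$, and one counts how many facets map onto a fixed target facet and with what orientation; because the rows are unlabeled in the target but labeled in the source, the fiber over a generic point of the facet opposite $j_0$ corresponds to the $(p-1)!$ orderings — but after accounting for orientation cancellations coming from row transpositions being orientation-reversing, the net signed count collapses to $\pm 1$. I would spell this sign bookkeeping out carefully, as it is the crux; alternatively, I would cite the references already invoked in the excerpt (\cite{BMZ15,VZ11}) for the statement that the relevant equivariant degree is $\pm 1\bmod p$, reducing the lemma to the general principle ``nonzero-degree $\Rightarrow$ surjective.''

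Alternatively, one may bypass degree theory entirely via a Borsuk--Ulam-type contradiction: if $f$ were not surjective, it would miss a point $y$, hence miss the whole free orbit $\Z/p\cdot y$, and composing with a $\Z/p$-equivariant deformation retraction of $\partial\Delta_{p-1}\setminus(\Z/p\cdot y)$ onto a $\Z/p$-invariant subcomplex of dimension $<p-2$ would yield a $\Z/p$-equivariant map $\Delta_{p-1,p}\to Z$ with $Z$ a free $\Z/p$-complex of dimension at most $p-3$; since $\Delta_{p-1,p}$ is $(p-3)$-connected would be needed to derive a contradiction via connectivity, but $\Delta_{p-1,p}$ is in fact \emph{not} highly connected, so this route needs the pseudomanifold orientability input after all and essentially reduces to the degree argument. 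I would therefore commit to the degree computation as the primary line.
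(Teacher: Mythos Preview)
Your proposal is correct and follows essentially the same strategy as the paper: reduce surjectivity to nonvanishing degree, invoke the equivariant Hopf-type fact that all $\Z/p$-equivariant maps $\Delta_{p-1,p}\to\partial\Delta_{p-1}$ have the same degree modulo~$p$ (since the action on the domain is free), and then exhibit one explicit equivariant map of degree prime to~$p$. Your candidate map $(i,j)\mapsto j$ is exactly the column projection the paper uses. The one point to correct: the $(p-1)!$ top faces in the fiber over a fixed target facet all contribute with the \emph{same} sign, so the degree is $(p-1)!$ on the nose rather than collapsing to $\pm 1$ after cancellation---but by Wilson's theorem $(p-1)!\equiv -1\pmod p$, so your conclusion $\deg\equiv\pm 1\pmod p$ is still right and the argument goes through unchanged.
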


\begin{proof} An equivariant extension of the classical Hopf theorem (see ~\cite{KuBa96} and ~\cite{BMZ15, VZ11}) shows that any two maps $M^d\rightarrow N^d$ between two $d$-dimensional oriented pseudomanifolds equipped with the action of a finite group $G$ have the same degree modulo $|G|$ provided the action on $M^d$ is free. It is a basic fact of degree theory that any $f\colon \Delta_{p-1,p}\rightarrow \partial \Delta_{p-1}$ which fails to be surjective must have degree zero (namely, since such a factor must factor through the sphere $S^{p-2}$ with a point deleted, which is contractible), and so the proof is complete provided there exists \emph{some} $\Z/p$-equivariant map $\Delta_{p-1,p}\rightarrow \partial \Delta_{p-1}$ whose degree is non-zero modulo~$p$. For a canonical such example (see also ~\cite{BMZ15}), consider as in ~\cite{VZ11} the natural projection $\Delta_{p-1,p}$ which sends each maximal face$\{(i_1, j_1),...,(i_{p-1}, j_{p-1})\}$ of the domain to
the maximal face $\{i_1,...,i_{p-1}\}$ of $\Delta_{p-1}$; a regular value argument now shows that the degree is precisely $(p-1)!$. \end{proof}

Lemma~\ref{lem:degree} implies the following result on partial transversals from which Hall's criterion for zero-sums in $\Z/p$ follows quickly:

\begin{thm} 
\label{thm:partialtransversal}
    Let $p \ge 2$ be a prime, and let $a_1, a_2, \dots, a_{p-1} \in \Z/p$ be a sequence of length~$p-1$. Then there are pairwise distinct $b_1, b_2, \dots, b_{p-1} \in \Z/p$ such that 
    \[
        \{a_1+b_1, a_2+b_2, \dots, a_{p-1}+b_{p-1}\} = \{0, 1, \dots, p-2\}.
    \]
\end{thm}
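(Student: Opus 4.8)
The plan is to deduce the statement from Lemma~\ref{lem:degree} by encoding the desired partial transversal as a point in the preimage of a carefully chosen point of $\partial\Delta_{p-1}$ under a suitable $\Z/p$-equivariant map $\Delta_{p-1,p}\to\partial\Delta_{p-1}$. First I would dispose of the trivial case $p=2$ directly (one element, one choice of $b_1$), and assume $p\geq 3$ so that Lemma~\ref{lem:degree} applies. Next I would set up the combinatorial dictionary: a maximal face of the chessboard complex $\Delta_{p-1,p}$ is a matching saturating the $(p-1)$-element side, i.e.\ an injection $r\colon [p-1]\to\Z/p$ assigning to row $i$ a column $r(i)$, and we think of the ``rows'' $[p-1]$ as indexing the given sequence $a_1,\dots,a_{p-1}$ and the ``columns'' $\Z/p$ as the ambient group. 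The idea is to build an equivariant map $f\colon\Delta_{p-1,p}\to\partial\Delta_{p-1}$ whose behavior on a maximal face corresponding to the injection $r$ records the multiset $\{a_i + r(i) : i\in[p-1]\}\subset\Z/p$, and then to pick a target point of $\partial\Delta_{p-1}$ — say the barycenter of the facet of $\Delta_{p-1}$ indexed by $\{0,1,\dots,p-2\}\subset\Z/p$ — whose preimage forces exactly the conclusion $\{a_i+b_i\} = \{0,\dots,p-2\}$ with the $b_i := r(i)$ pairwise distinct.

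Concretely, I would identify $\partial\Delta_{p-1}$ with the boundary of the simplex on vertex set $\Z/p$, equipped with the $\Z/p$-action cyclically permuting vertices, so that it matches the column action on the chessboard and Lemma~\ref{lem:degree} gives surjectivity. The key construction is the map $f$ sending a rook at position $(i,j)$ (row $i$, column $j$) to the vertex $a_i + j\in\Z/p$ of $\partial\Delta_{p-1}$, extended affinely over faces. The first thing to check is that $f$ is well-defined as a map \emph{into the boundary} $\partial\Delta_{p-1}$ rather than all of $\Delta_{p-1}$: a maximal face of $\Delta_{p-1,p}$ has only $p-1$ rooks, so $f$ uses at most $p-1$ of the $p$ vertices of $\Delta_{p-1}$, hence lands in a proper face, i.e.\ in the boundary — and likewise smaller faces map into the boundary. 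Next, equivariance: shifting every column by $1$ sends $(i,j)\mapsto(i,j+1)\mapsto a_i+j+1$, which is exactly the vertex shift on the target, so $f$ commutes with the $\Z/p$-actions. Lemma~\ref{lem:degree} then yields that $f$ is surjective; in particular the barycenter of the facet spanned by $\{0,1,\dots,p-2\}$ lies in the image, so there is a point $x\in\Delta_{p-1,p}$ with $f(x)$ in the relative interior of that facet.

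The remaining step is to extract the combinatorial conclusion from such a preimage point $x$. Writing $x$ as a convex combination of rooks $(i,j)$ with positive coefficients, the support of $x$ is a single face $\sigma$ of $\Delta_{p-1,p}$ — a partial matching — and $f$ maps $\sigma$ onto the face of $\Delta_{p-1}$ spanned by $\{\,a_i+j : (i,j)\in\sigma\,\}$; since $f(x)$ lies in the \emph{open} facet on $\{0,\dots,p-2\}$, this spanned face must be exactly that facet, so $\{a_i+j:(i,j)\in\sigma\} = \{0,1,\dots,p-2\}$, a set of size $p-1$. Because a matching assigns distinct columns to distinct rows and the values $a_i+j$ are all distinct (being $p-1$ distinct elements), $\sigma$ must in fact be a \emph{perfect} matching on the row side: it has $p-1$ rooks, one in each row $i\in[p-1]$, with distinct columns $b_i := j_i$. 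Setting $b_i$ to be the column of the (unique) rook in row $i$ gives pairwise distinct $b_1,\dots,b_{p-1}\in\Z/p$ with $\{a_1+b_1,\dots,a_{p-1}+b_{p-1}\} = \{0,1,\dots,p-2\}$, as desired. I expect the main obstacle to be the careful verification that a preimage of an interior facet point really does correspond to a matching saturating all $p-1$ rows (ruling out degenerate smaller-support faces), which hinges on the pigeonhole observation that $p-1$ distinct target values can only be hit by a face with at least $p-1$ rooks, combined with the dimension bound $\dim\Delta_{p-1,p} = p-2$ forcing exactly $p-1$ rooks arranged as a partial transversal.
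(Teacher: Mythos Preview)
Your proof is correct and follows essentially the same route as the paper's: define the simplicial $\Z/p$-equivariant map $\Delta_{p-1,p}\to\partial\Delta_{p-1}$ sending the rook $(i,j)$ to $a_i+j$, invoke Lemma~\ref{lem:degree} for surjectivity, and read off the desired injection from a maximal face mapping onto the facet $\{0,\dots,p-2\}$. You are in fact slightly more careful than the paper in two places: you dispose of $p=2$ separately (Lemma~\ref{lem:degree} is stated only for $p\ge 3$), and you spell out why the simplicial image lands in $\partial\Delta_{p-1}$ and why the supporting face of a preimage of the facet barycenter must saturate all $p-1$ rows.
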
 

\begin{proof}
    We identify the vertex set of the chessboard complex $\Delta_{p-1,p}$ with $[p-1]\times \Z/p$ and the vertex set of~$\Delta_{p-1}$ with $\Z/p$ as well (i.e., $\Delta_{p-1}$ is the standard simplex in the regular representation $\mathbb{R}[\Z/p]$). The map $f\colon \Z/p \times [p-1] \to \Z/p$ given by $f(i,j) = a_j+i$ induces a simplicial and $\Z/p$-equivariant map $f \colon \Delta_{p-1,p} \to \Delta_{p-1}$. Any such map is surjective onto $\partial\Delta_{p-1}$ by Lemma~\ref{lem:degree}, so in particular there must be a maximal face of $\Delta_{p-1,p}$ which is mapped onto the face $\{0,1,\dots, p-2\}$ of $\partial\Delta_{p-1}$. Thus there is an injective map $\pi\colon [p-1] \to \Z/p$ such that $\{0,1,\dots,p-2\} = \{f(\pi(i),i) \mid i \in [p-1]\} = \{a_{i} + \pi(i) \mid i \in [p-1]\}$, completing the proof.\end{proof}

\begin{proof}[Proof of Theorem~\ref{thm:Hall} for primes]

First, let $b=\sum_{i\in Z/p} i$ (thus $b=1$ if $p=2$ and is zero otherwise). We define the simplicial complex~$Y$ on $\Z/p\times \Z/p$ to consist of all subsets of $\Z/p \times \Z/p$ that do not contain the graph of any function $z\colon \Z/p \to \Z/p$ with $\sum_i z(i) =b$. We now let $Y'$ be the simplicial complex $Y \cup \Delta_{p,p}$ and we equip $Y'$ with the $\Z/p$-action that is defined on vertices $(i,\ell) \in \Z/p \times \Z/p$ by $j\cdot (i, \ell) = (i, \ell +j)$ for all $j \in \Z/p$.

We now show that any $\Z/p$-equivariant simplicial map from $Y' \to \Delta_{p-1}$ is surjective. To see this, first observe that the restriction of $Y'$ to $\{0,1,\dots, p-2\} \times \Z/p$ equivariantly contains~$\Delta_{p-1,p}$. By Lemma~\ref{lem:degree}, any $\Z/p$-equivariant map $\Delta_{p-1,p} \to \partial\Delta_{p-1}$ is surjective, so there is some injective map $\pi\colon\{0,1,\dots,p-2\} \to \Z/p$ so that the face $\{(i, \pi(i)) \mid i \in \{0,1,\dots, p-2\}\}$ of $\Delta_{p-1,p}$ is mapped simplicially onto the face $\{0,1,\dots,p-2\}$ of~$\partial \Delta_{p-1}$. We may now extend this face to a larger face in~$Y'$ that will surject onto~$\Delta_{p-1}$, as follows: there must be a unique vertex of the form $(p-1,j)$ of $Y'$ that maps to the vertex $p-1$ of~$\Delta_{p-1}$. We claim that $$\sigma\colon=\{(i, \pi(i)) \mid i \in \{0,1,\dots, p-2\}\} \cup \{(p-1,j)\}$$ is a face of $Y'$ and thus that $Y' \to \Delta_{p-1}$ is surjective. To see this, note that if $j \ne \pi(i)$ for all $i\in \{0,1,\dots,p-2\}$, then $\sigma$ lies in $\Delta_{p,p} \subseteq Y'$. On the other hand, if $j = \pi(i)$ for some $i \in \{0,1,\dots,p-2\}$, then we must have $j + \sum_i \pi(i) \ne b$, so that now $\sigma$ lies in $Y\subset Y'$. Thus any $\Z/p$-equivariant simplicial map from $Y'$ to $\Delta_{p-1}$ is surjective.

Finally, let $a_1, \dots ,a_p \in \Z/p$ be a given zero-sum sequence. We must show there is some permutation $\pi$ of $\Z/p$ such that $\{a_1+\pi(1),\ldots, a_p+\pi(p)\}=\Z/p$. For this, we define a simplicial map $f\colon Y' \to \Delta_{p-1}$ by specifying that $f(i,j) = a_j+i$ for each vertex $(i,j)$. Since $f$ is $\Z/p$-equivariant and is therefore surjective, there exists a function $\pi \colon \Z/p \to \Z/p$ such that $f$ carries some maximal face $\tau:=\{(i, \pi(i)) \mid i \in \Z/p\}$ of~$Y'$ onto~$\Delta_{p-1}$. To complete the proof, we show that $\tau$ is not a face of $Y$, hence that $\tau$ is a face of ~$\Delta_{p,p}$, and therefore that $\pi$ is indeed a permutation of $\Z/p$. To see this, observe that by definition $\sum_i\pi'(i)\neq b$ were $\tau$ in $Y$. However, since $\{\pi'(i)+a_i\}=\Z/p$ and $\sum_i a_i=0$ we must have $\sum_i \pi'(i)=\sum_i \pi'(i) + a_i= b$. \end{proof}

\subsection{Proof of Theorem~\ref{thm:prob}} 

We now give a second proof of the original EGZ theorem, now based on the topological connectivity of chessboard complexes (see, e.g.,~\cite{BLVZ94}). The proof method will then easily yield the probabilistic extension Theorem~\ref{thm:prob}. First, we need the following Lemma, for which we consider the free $\Z/p$-action on $\Delta_{p,2p-1}$ arising from permuting the rows of the $p\times (2p-1)$ chessboard.

\begin{lem}
\label{lem:barycenter}
If $p\geq 2$ is prime, then for any continuous $\Z/p$-equivariant map $\Delta_{p,2p-1} \to \Delta_{p-1}$ there is some point in $\Delta_{p,2p-1}$ whose image is the barycenter of $\Delta_{p-1}$.
\end{lem}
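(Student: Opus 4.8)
The plan is to argue by contradiction, using the high topological connectivity of the chessboard complex $\Delta_{p,2p-1}$ together with a Dold-type non-existence theorem for equivariant maps. First I would set up the two $\Z/p$-spaces carefully. The row-permutation action on $\Delta_{p,2p-1}$ is free: any point fixed by a nontrivial cyclic shift would be supported on a nonempty matching of $K_{p,2p-1}$ invariant under that shift, and a matching containing an edge $(i,j)$ would then also contain $(i+1,j)$, which is impossible since two edges of a matching cannot share a column. On the target I take the $\Z/p$-action on $\Delta_{p-1}$ that cyclically permutes its $p$ vertices, so that $\Delta_{p-1}$ is the standard simplex in $\R[\Z/p]$ as in the proof of Theorem~\ref{thm:partialtransversal}; here the barycenter is the unique fixed point, and $\partial\Delta_{p-1}$ is a free $\Z/p$-space homeomorphic to $S^{p-2}$ (using that $p$ is prime, so the only cyclically invariant subsets of the vertex set are $\emptyset$ and everything). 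Now suppose some $\Z/p$-equivariant continuous map $f\colon \Delta_{p,2p-1}\to\Delta_{p-1}$ avoids the barycenter. Composing $f$ with the $\Z/p$-equivariant deformation retraction $\Delta_{p-1}\setminus\{\text{barycenter}\}\to\partial\Delta_{p-1}$ (radial projection away from the barycenter) yields a $\Z/p$-equivariant map $\Delta_{p,2p-1}\to S^{p-2}$.

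The key input is then the connectivity of chessboard complexes~\cite{BLVZ94}: the complex $\Delta_{m,n}$ is $(\nu-2)$-connected, where $\nu=\min\{m,n,\lfloor(m+n+1)/3\rfloor\}$; for $(m,n)=(p,2p-1)$ this gives $\nu=p$, so $\Delta_{p,2p-1}$ is $(p-2)$-connected. But Dold's theorem~\cite{Dol83} forbids a $\Z/p$-equivariant map from a $(p-2)$-connected free $\Z/p$-space to a free $\Z/p$-space of dimension at most $p-2$, such as $S^{p-2}\cong\partial\Delta_{p-1}$, giving the desired contradiction; hence $f$ must hit the barycenter. For the degenerate value $p=2$ the statement is immediate without invoking Dold: $\Delta_{2,3}\cong S^1$ is path-connected whereas $\partial\Delta_1\cong S^0$ is not, so no equivariant map $S^1\to S^0$ exists (this is just the $n=0$ instance of the same principle).

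The only substantive ingredient here is the sharp connectivity bound for $\Delta_{p,2p-1}$; the remainder is routine equivariant topology. I expect that the one point demanding care — rather than a genuine obstacle — is precisely that the connectivity bound equals $p-2$, exactly matching $\dim S^{p-2}$, so there is no slack: the argument depends on having the optimal connectivity of the chessboard complex available, and on pinning down the correct $\Z/p$-action on the target simplex so that the barycenter is genuinely the unique fixed point and its complement retracts equivariantly onto a free sphere. Once Theorem~\ref{thm:prob} is in view, the same proof template (replace $\Delta_{p-1}$ by the simplex of probability measures on $\Z/p$ and the barycenter by the uniform measure) will carry over with only notational changes.
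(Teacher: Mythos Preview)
Your proposal is correct and takes essentially the same approach as the paper: both invoke the $(p-2)$-connectivity of $\Delta_{p,2p-1}$ from~\cite{BLVZ94} and then apply a Dold/Borsuk--Ulam type obstruction to rule out an equivariant map missing the barycenter. The paper phrases the last step as subtracting the barycenter to obtain an equivariant map into $\R^\perp[\Z/p]$ that must vanish (via the discussion preceding Theorem~\ref{thm:topological-delted}), whereas you radially retract onto $\partial\Delta_{p-1}\cong S^{p-2}$ and cite Dold directly---these are two formulations of the same argument.
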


\begin{proof} As shown in ~\cite{BLVZ94}, $\Delta_{p,2p-1}$ is $(p-2)$-connected and $(p-1)$-dimensional. It follows from the discussion in Section~\ref{sec:background} immediately prior to the statement of Theorem~\ref{thm:topological-delted} that any continuous $\Z/p$-equivariant map $\Delta_{p,2p-1}\rightarrow \mathbb{R}^\perp[\Z/p]$ must have a zero. Viewing $\Delta_{p-1}$ as the standard simplex inside $\mathbb{R}[\Z/p]$, translation by the barycenter of $\Delta_{p-1}$ shows that any $\Z/p$-equivariant map $\Delta_{p,p-1}\rightarrow \Delta_{p-1}$ results in equivariant map $\Delta_{p,2p-1}\rightarrow \mathbb{R}^\perp[\Z/p]$. As such a map must have a zero, the map $\Delta_{p,p-1}\rightarrow \Delta_{p-1}$ must hit the barycenter. \end{proof}

\begin{proof}[Second Proof of the EGZ theorem for primes~$p$]
Let $a_1, \dots, a_{2p-1} \in \Z/p$ be an arbitrary sequence. 
Identifying the vertex set of the chessboard complex $\Delta_{p,2p-1}$ with $\Z/p \times [2p-1]$, we let $f\colon \Z/p \times [2p-1] \to \Z/p$ be the simplicial map defined on the vertex set by $f(i,j) = a_j+i$. This map is $\Z/p$-equivariant after identifying the vertex set of ~$\Delta_{p-1}$ with $\Z/p$. Any such map must hit the barycenter of $\Delta_{p-1}$, and since the map is simplicial there must be a face of the chessboard complex $\Delta_{p,2p-1}$ which is mapped onto~$\Delta_{p-1}$. Thus there is an injective map $\pi\colon \Z/p \to [2p-1]$ such that $\Z/p = \{f(i,\pi(i)) \mid i \in \Z/p\} = \{a_{\pi(i)} + i \mid i \in \Z/p\}$. This implies $\sum_{i\in \Z/p} i = \sum_{i \in \Z/p} a_{\pi(i)} + i$ and so that $\sum a_{\pi(i)}=0$. 
\end{proof}

Replacing simplicial maps with linear ones proves our fractional generalization of the EGZ theorem.

\begin{proof}[Proof of Thm~\ref{thm:prob}]
 First, observe that any probability measure $\mu$ on $\Z/p$ may be identified with a point $x_\mu$ in~$\Delta_{p-1}$ since both uniquely describe convex coefficients for the vertices of~$\Delta_{p-1}$. Here we again identify $\Z/p$ with the vertices of~$\Delta_{p-1}$. Explicitly, this bijective correspondence is given by $x_\mu = \sum_{i \in \Z/p} \mu(\{i\})\cdot i$. Repeating the proof of the EGZ theorem above, given a sequence $\mu_1,\ldots, \mu_{2p-1}$ of measures on $\Z/p$ we define the continuous $\Z/p$-equivariant map $f\colon \Delta_{p,2p-1} \to \Delta_{p-1}$ by setting $f(i,j) = \mu_j+ i$ on the vertices and extending to the faces of $\Delta_{p,2p-1}$ by linear interpolation. As each $\mu_j$ is a probability measure on~$\Z/p$, the image $f(i,j)$ of each vertex lies in~$\Delta_{p-1}$ and so $f$ does indeed map to $\Delta_{p-1}$. While this map is not simplicial (unless each of the $\mu_j$ are Dirac measures), Lemma~\ref{lem:barycenter} applies nonetheless and so there is a face $\sigma$ of $\Delta_{p,2p-1}$ such that $f(\sigma)$ contains the barycenter of $\Delta_{p-1}$. Thus there exists a permutation $\pi$ of $\Z/p$ and convex coefficients $\lambda_i$ such that $\sum_i \mu_{\pi (i)} + i$ equals the  barycenter of $\Delta_{p-1}$. As the barycenter of $\Delta_{p-1}$ corresponds to the uniform probability measure on $\Z/p$, the proof is complete. 
\end{proof}

Corollary~\ref{cor:balanced} immediately follows from Theorem~\ref{thm:prob}:

\begin{proof}[Proof of Cor.~\ref{cor:balanced}]
    Let $A_1, \dots, A_{2p-1} \subseteq \Z/p$ be nonempty subsets of~$\Z/p$ and associate to each $A_i$ the uniform probability distribution~$\mu_i$ supported on~$A_i$. Applying Theorem~\ref{thm:prob} to the sequence $\mu_1, \dots, \mu_{2p-1}$ thereby completes the proof.
\end{proof}

We conclude this section by showing with the following remarks concerning Lemma~\ref{lem:barycenter} and its consequences. First, it is clear from the discussion preceding Theorem~\ref{thm:topological-delted} that Theorem~\ref{thm:prob} and Corollary~\ref{cor:balanced} can be extended to arbitrary elementary abelian groups $\Z/p^k$. Secondly, as we have seen, our chessboard proof of the EGZ theorem for cyclic groups of prime order relied only on the fact that any equivariant \emph{simplicial} map $\Delta_{p,2p-1}\rightarrow \Delta_{p-1}$ hits the barycenter of the simplex, or equivalently that the simplicial map is surjective. Thus if it could be shown that any $\Z/n$-equivariant simplicial map $\Delta_{n,2n-1}\rightarrow \Delta_{n-1}$ is surjective for any integer $n\geq 2$, our proof technique would imply the EGZ theorem for arbitrary cyclic groups. As we now show, it actually follows immediately from the EGZ theorem that any $\Z/n$-equivariant simplicial map $\Delta_{n,2n-1}\rightarrow \Delta_{n-1}$ is indeed surjective. Thus the EGZ theorem and the surjectivity of such equivariant simplicial maps may be easily deduced from each other:

\begin{thm}
\label{thm:simplicial}
    Let $n \ge 2$ be an integer. Then any $\Z/n$-equivariant simplicial map $f\colon \Delta_{n,2n-1} \to \Delta_{n-1}$ is surjective.
\end{thm}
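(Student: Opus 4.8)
The plan is to deduce this from the Erdős--Ginzburg--Ziv theorem, now available for all $n$ via Theorem~\ref{thm:cd} (or classically). First I would set up the translation between equivariant simplicial maps $\Delta_{n,2n-1}\to \Delta_{n-1}$ and $\Z/n$-colorings of $[2n-1]$. Identify the vertex set of $\Delta_{n,2n-1}$ with $\Z/n \times [2n-1]$, where the $\Z/n$-action permutes the first coordinate, and identify the vertices of $\Delta_{n-1}$ with $\Z/n$ so that the target carries the $\Z/n$-action by translation. A $\Z/n$-equivariant simplicial map $f$ is then determined by its values on the vertices $(0,j)$, $j \in [2n-1]$: writing $f(0,j) = a_j \in \Z/n$, equivariance forces $f(i,j) = a_j + i$ for all $i \in \Z/n$. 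Conversely, any sequence $a_1,\dots,a_{2n-1} \in \Z/n$ determines such a map $f$, provided one checks it is a well-defined simplicial map, which is automatic since the target is a full simplex (every subset of $\Z/n$ is a face of $\Delta_{n-1}$).

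Next I would observe that $f$ is surjective (onto the geometric realization $|\Delta_{n-1}|$) if and only if some maximal face of $\Delta_{n,2n-1}$ is mapped onto the full vertex set $\Z/n$ of $\Delta_{n-1}$ — this is because a simplicial map is surjective exactly when it hits every vertex's supporting open face, equivalently when the images of faces cover all of $|\Delta_{n-1}|$, which (for a simplicial map into a simplex) happens iff the $n$ distinct vertices $0,1,\dots,n-1$ all appear as images of the vertices of a single face. A maximal face of $\Delta_{n,2n-1}$ is a non-attacking rook placement using all $n$ rows, i.e.\ an injective map $\pi\colon \Z/n \to [2n-1]$ together with the face $\{(i,\pi(i)) \mid i \in \Z/n\}$. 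Its image is $\{a_{\pi(i)} + i \mid i \in \Z/n\}$, which equals $\Z/n$ precisely when the $a_{\pi(i)} + i$ are pairwise distinct. So surjectivity of $f$ is equivalent to the existence of an injective $\pi\colon \Z/n \to [2n-1]$ with $\{a_{\pi(i)}+i \mid i\in \Z/n\} = \Z/n$.

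Then I would invoke EGZ. Given the arbitrary sequence $a_1,\dots,a_{2n-1} \in \Z/n$, the EGZ theorem yields indices $i_1 < \dots < i_n$ with $\sum_{k=1}^n a_{i_k} = 0$ in $\Z/n$. By Theorem~\ref{thm:Hall} (Hall's criterion), a zero-sum sequence $a_{i_1},\dots,a_{i_n}$ of length $n$ is exactly a difference of two permutations of $\Z/n$: there are permutations $\{b_1,\dots,b_n\}$ and $\{c_1,\dots,c_n\}$ of $\Z/n$ with $a_{i_k} = b_k - c_k$, equivalently (reindexing by the permutation sending $k \mapsto $ the position) there is an identification of $\{a_{i_1},\dots,a_{i_n}\}$ with the multiset $\Z/n$ under a bijection $\phi\colon \Z/n \to \{i_1,\dots,i_n\}$ and a permutation $b$ of $\Z/n$ with $a_{\phi(i)} + b(i)$ running over $\Z/n$. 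Composing, set $\pi = \phi \circ b^{-1}$ (an injection $\Z/n \to [2n-1]$); then $\{a_{\pi(i)} + i \mid i \in \Z/n\} = \Z/n$. By the previous paragraph, $f$ is surjective, completing the proof. The main obstacle is purely bookkeeping: carefully matching the index $i \in \Z/n$ appearing in the face $(i,\pi(i))$ with the permutation supplied by Hall's theorem, so that $a_{\pi(i)}+i$ is exactly a difference of two permutations; once the dictionary between ``maximal face with full image'' and ``zero-sum subsequence of length $n$'' is pinned down, EGZ does all the work and there is no topology needed.
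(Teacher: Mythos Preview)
Your proposal is correct and follows the same overall strategy as the paper: read off the sequence $a_j = f(0,j)$, apply EGZ to it, and use the resulting zero-sum subsequence to locate a maximal face of $\Delta_{n,2n-1}$ that surjects onto~$\Delta_{n-1}$.

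There is one point where you are actually more careful than the paper. The paper's proof picks any injection $\pi\colon \Z/n \to [2n-1]$ indexing the zero-sum subsequence and then computes $f(x)=\sum_i \tfrac{1}{n}f(i,\pi(i))=\sum_i \tfrac{1}{n}a_{\pi(i)}+\sum_i \tfrac{1}{n}i=\sum_i \tfrac{1}{n}i$, concluding that the barycenter is hit. That step tacitly splits the vertex $e_{a_{\pi(i)}+i}$ as $e_{a_{\pi(i)}}+e_i$ inside $\R[\Z/n]$, which is not valid; the bare zero-sum condition $\sum_i a_{\pi(i)}=0$ in $\Z/n$ does \emph{not} force the multiset $\{a_{\pi(i)}+i\}$ to equal $\Z/n$ for an arbitrary indexing~$\pi$. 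What is really needed is precisely your appeal to Theorem~\ref{thm:Hall}: Hall's criterion lets you re-index the zero-sum subsequence by a bijection $\pi$ so that $\{a_{\pi(i)}+i : i\in\Z/n\}=\Z/n$, after which the face $\{(i,\pi(i))\}$ genuinely maps onto~$\Delta_{n-1}$. This is exactly the ``equivalent reformulation'' of EGZ via Hall that the paper invokes elsewhere, and your proof makes the dependence explicit.
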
 

\begin{proof} As before, we identify the vertex set of $\Delta_{n-1}$ by $\Z/n$. By the remarks above, we only need to show that the EGZ theorem implies that a given $\Z/n$-equivariant simplicial map $f\colon \Delta_{n,2n-1}\rightarrow \Delta_{n-1}$ hits the barycenter of $\Delta_{n-1}$. To that end, consider the sequence $a_1=f(0,1),\ldots, a_{2n-1}={f(0,2n-1)}$ in $\Z/n$. This has a zero-sum subsequence of length $n$ by the EGZ theorem, and thus there is a permutation $\pi$ of $\Z/n$ such that $\sum_i f(0,\pi (i))=0$. The set $\{(i,\pi(i)) \mid i\in \Z/p\}$ is a face of $\Delta_{n,2n-1}$, and by equivariance we have that $f(i,\pi(i))=a_{\pi (i)}+i$. Letting $x=\sum_i \frac{1}{n} (i,\pi (i))$, we now have that $f(x)=\sum_i \frac{1}{n} f(i,\pi (i))= \sum_i \frac{1}{n}a_{\pi(i)}+ \sum_i\frac{1}{n} i= \sum_i \frac{1}{n}i$ is the barycenter of $\Delta_{n-1}$. 
\end{proof}

\section{Erd\H os--Ginzburg--Ziv plus constraints}
\label{sec:constraints}

We conclude with the proof of our constrained version of the EGZ theorem. 

\begin{proof}[Proof of Thm.~\ref{thm:constraints}]
    Let $a_1, \dots, a_{2p-1} \in \Z/p$. We identify the vertex set of $(\Delta_{2p-2})^{*p}_\Delta$ with $\Z/p \times [2p-1]$. Suppose now that  $X \subseteq (\Delta_{2p-2})^{*p}_\Delta$ is a $\Z/p$-equivariant $(2p-3)$-connected subcomplex of $(\Delta_{2p-2})^{*p}_\Delta$. For any $i \in \Z/p$, we let $Y_i$ be the subcomplex of $(\Delta_{2p-2})^{*p}_\Delta$ defined by letting $\sigma = \{0\} \times A_0 \cup \dots \cup \{p-1\} \times A_{p-1}$ be a face of~$Y_i$ if $|A_i| \le 1$. Thus $\cap_{i\in \Z/p} Y_i=\Delta_{p,2p-1}$ is precisely the $p\times (2p-1)$ chessboard complex. Letting $d\colon X\times X \to [0, \infty)$ be any metric on $X$ compatible with the topology of $X$ as a simplicial complex (e.g., the $\ell_1$-metric), for each $x \in X$ we denote by $d(x, Y_i)= \min_{y \in Y_i} d(x,y_i)$ the distance of $x$ to the subcomplex~$Y_i$. We thus have $x\in\Delta_{p,2p-1}$ if and only if $d(x,Y_i)=0$ for all $i\in \Z/p$.

    As in our second proof of the EGZ theorem, we define $f\colon \Z/p \times [2p-1] \to \Z/p$ by $f(i,j) = a_j +i$, which induces a $\Z/p$-equivariant simplicial map $f\colon X \to \Delta_{p-1}$. Thinking of $\Delta_{p-1}$ as the standard simplex in $\mathbb{R}[\Z/p]$ and letting $b$ denote its barycenter, we now define 
    \[
        F\colon X \to (\mathbb{R}^\perp[\Z/p])^{\oplus 2}, \ x \mapsto (f(x)-b, d(x,Y_0)-a(x), \dots, d(x,Y_{p-1})-a(x)),
    \] where $a(x)=\frac{1}{p}\sum_i d(x,Y_i)$ is the average distance of $x\in X$ to the $Y_i$. 
    
    The map $F$ is $\Z/p$-equivariant, and since by assumption $X$ is $(2p-3)$-connected, it follows from Theorem~\ref{thm:topological-delted} and the remarks preceding it at that $F$ must have a zero~$x$. Thus $f(x)$ is at the barycenter of~$\Delta_{p-1}$, and moreover we have that $d(x,Y_0) = \dots = d(x, Y_{p-1})=a(x)$. Let $\sigma = \{0\} \times A_0 \cup \dots \cup \{p-1\} \times A_{p-1}$ be the inclusion-minimal face of $X$ that contains~$x$. We now claim that $a(x)=0$, so that $d(x,Y_i)=0$ for all $i\in Z/p$ and $x\in \Delta_{p,2p-1}$. Indeed, if $a(x)>0$, then  $d(x, Y_j) > 0$ for some $j\in \Z/p$ and therefore that $d(x,Y_i)>0$ for all $i\in \Z/p$. We would therefore have that $|A_i|>1$ for all $i\in \Z/p$. Since by definition of~$(\Delta_{2p-1})^{*p}_\Delta$ the $A_j$ are pairwise disjoint, this implies that $|\bigcup_j A_j| \ge 2p$, a contradiction since $\bigcup_j A_j \subseteq [2p-1]$. Thus $|A_j| \le 1$ for all~$j$. On the other hand, since $f(\sigma) = \Delta_{p-1}$, we therefore must have $|A_j| = 1$ for all~$j$. Thus $\sigma = \{(i, \pi(i)) \mid i \in \Z/p\}$ for some injective map $\pi\colon \Z/p \to [2p-1]$, and since $\{f(i, \pi(i)) \mid i \in \Z/p\} = \Z/p$ we have that $\sum a_{\pi(i)} = 0$ as before.

    To finish the proof, we therefore only need to verify that the constraints of Theorem~\ref{thm:constraints} give rise to a $(2p-3)$-connected subcomplex $X \subseteq (\Delta_{2p-2})^{*p}_\Delta$. The subcomplex $X$ is defined by the property that for every face $\sigma$ of~$(\Delta_{2p-2})^{*p}_\Delta$, we have that whenever $(i,2j-1) \in \sigma$ then $(i+x, 2j) \notin \sigma$ unless $x = d_j$. The complex $X$ is indeed highly connected, since it is the join of $(p-1)$ circles (corresponding to $\{a_1,a_2\}, \dots, \{a_{2p-3},a_{2p-2}\})$ and $p$ discrete points (corresponding to~$a_{2p-1}$). Restricting the vertex set of $X$ to $\Z/p \times \{2j-1,2j\}$ yields a $\Z/p$-equivariant cycle of length $2p$ that traverses $(0,2j-1),(d_j,2j),(d_j,2j-1), (2d_j, 2j), \dots, (pd_j, 2j)$. This closes up only after $2p$ steps because $p$ is prime. As there are no further constraints, $X$ is the join of these cycles and the $p$ vertices corresponding to the restriction of $X$ to $\Z/p \times \{2p-1\}$. As the $(p-1)$-fold join of the circle $S^1$ is a $(2p-3)$-dimensional sphere $S^{2p-3}$, we see that $X=\cup_{i\in \Z/p} D_i^{2p}$ is the union of the $p$ cones $D_i^{2p-2}=S^{2p-3}\ast \{i\}$, each of which is a $(2p-2)$-dimensional disk. Thus $X$ is $(2p-3)$-connected. \end{proof}

\bibliography{bib}{}

\begin{thebibliography}{}

\bibitem{AKZ18} R. Aharoni, D. Kotlar, and R. Ziv. Uniqueness of the extreme cases in theorems of Drisko and Erd\H os--Ginzburg--Ziv, \emph{Eur. J. Comb.}, Vol. 67 (2018) 222--229.

\bibitem{AD93} N. Alon and M. Dubiner. Zero-sum sets of prescribed size, in \emph{Combinatorics, Paul Erd\H os is Eighty, Vol. 1, Keszthely (Hungary),} Bolyai Soc. Math. Stud., 33--50, Janos Bolyai Math. Soc., Budapest, 1993.

\bibitem{AFL86} N. Alon, P. Frankl, and L. Lova\'sz. The chromatic number of Kneser hypergraphs. \textit{Trans. Amer. Math. Soc.}, Vol. 298 (1986) 359--370.

\bibitem{Bar82}  I. B\'ar\'any. A generalization of Carath\'eodory's theorem. \textit{Discrete Math.}, Vol. 40, No. 2--3 (1982) 141--152. 

\bibitem{BL92} I. B\'ar\'any and D. G. Larman. A colored version of Tverberg's theorem. \textit{J. London Math. Soc.} Vol. 45, No. 2 (1992), 314--320.

\bibitem{BD92} A. Bialostocki and P. Dierker. On the Erd\H os--Ginzburg--Ziv theorem and the Ramsey numbers for stars and matchings. \textit{Discrete Math.}, Vol. 110, No. 1--3 (1992) 1--8.

\bibitem{BD90} A. Bialostocki and P. Dierker. Zero-sum Ramsey theorems. \textit{Congress. Numer.}, Vol. 70 (1990) 119--130.

\bibitem{Bir46} G. Birkhoff. Three observations on linear algebra. \emph{Univ. Nac. Tacum\'an Rev. Ser. A}, Vol. 5 (1946) 147--151.

\bibitem{BLVZ94} A. Bj\"orner, L. Lov\'asz, S.T. Vre\'cica, and R.T. \v Zivaljevi\'c, Chessboard complexes and matching complexes, \emph{J. London Math. Soc.}, Vol. 49, No. 1 (1994) 25--39.

\bibitem{BMZ15} P. V. M. Blagojev\'ic, B. Matschke, and G. M. Ziegler. Optimal bounds for the colored Tverberg problem. \emph{J. Eur. Math. Soc.} Vol. 17, No. 4 (2015) 739--754.

\bibitem{Car11} C. Carath\'eodory. \"Uber den ariabilit\"atsbereich der Fourier'schen konstanten von positiven harmonischen funktionen. \emph{Rendiconti del Circolo Matematico di Palermo (1884?1940)}, Vol. 32 (1911) 193--217.

\bibitem{Car91} Y. Caro. On zero-sum delta systems and multiple copies of hypergraphs. \textit{J. Graph Theory}, Vol. 15 (1991) 511--521.

\bibitem{Car96} Y. Caro. Zero-sum problems -- A survey. \textit{Discrete Math.}, Vol. 152 (1996), 93--113.

\bibitem{Dol83} A. Dold. Simple proofs of some Borsuk-Ulam results, Proc. Northwestern Homotopy Theory Conf. (H. R. Miller and S. B. Priddy, eds.), \emph{Contemp. Math.}, Vol. 19 (1983), 65?69.

\bibitem{Dri98} A. A. Drisko, Transversals in row-Latin rectangles, \emph{J. Combin. Theory Ser. A}, Vol. 84 (1998) 181--195.

\bibitem{EGZ61} P. Erd\H os, A. Ginzburg, and A. Ziv. A theorem in additive number theory, \emph{Israel Research and Development Nat. Council Bull., Sect. F}, Vol. 10 (1961), 41--43.

\bibitem{Fri20} F. Frick. Chromatic numbers of stable Kneser hypergraphs via topological Tverberg-type theorems, \textit{Int. Math. Res. Not. IMRN}, Vol. 13 (2020) 4037--4061. 

\bibitem{GG06} W. Gao and A. Geroldinger. Zero-sum problems in finite abelian groups: A survey, \emph{Expo. Math.}, Vol. 24, No. 4 (2006) 337--369. 

\bibitem{Hal52} M. Hall. A combinatorial problem on abelian groups. \emph{Proc. Amer. Math. Soc.} Vol. 3, No. 4 (1952) 584--587. 

\bibitem{Hal35} P. Hall. On representatives of subsets, \emph{J. London Math. Soc.}, Vol. 10, No. 1 (1935) 26--30.

\bibitem{Ha00} A. Hatcher. \textit{Algebraic Topology}, Cambridge University Press. Cambridge (2000). 

\bibitem{KP12} R. Karasev and F. Petrov. Partitions of nonzero elements of a finite field into pairs. \emph{Israel J. Math.}, Vol. 192, No. 1 (2012) 143--156.

\bibitem{Koz08} D. Kozlov. \emph{Combinatorial Algebraic Topology}. Springer Berlin, Heidelberg (2008). 

\bibitem{Kri00} I. Kr\'i\v z. A correction to ``Equivariant cohomology and lower bounds for chromatic numbers''. \emph{Trans. Amer. Math. Soc.} Vol. 352 (2000) 1951--1952.

\bibitem{Kri92} I. Kr\'i\v z. Equivariant cohomology and lower bounds for chromatic numbers. \emph{Trans. Amer. Math. Soc.}, Vol. 333, No. 2 (1992) 567--577. 

\bibitem{KuBa96} A. Kushkuley and Z. Balanov. \textit{Geometric Methods in Degree Theory for Equivariant Maps, Lecture Notes in Math.}, Vol. 1632, Springer, Berlin (1996).

\bibitem{Lon13} M. de Longueville. \textit{A Course in Topological Combinatorics}. Universitext. New York, NY. Springer (2013).

\bibitem{Lov78} L. Lov\'asz. Kneser's conjecture, chromatic number and homotopy. \emph{J. Combin. Theory Ser. A}, Vol. 25 (1978) 319--324.

\bibitem{Mat12} J. Matou\v sek. \textit{Lectures on Discrete Geometry}, Graduate Texts in Mathematics Vol. 212, Springer Science \& Business Media (2013). 

\bibitem{MZ04} J. Matou\v sek and G. M. Ziegler. Topological lower bounds for the chromatic number: A hierarchy. \emph{Jahresber. Deutsch. Math.-Verein.}, Vol. 106, No. 2 (2004) 71--90.

\bibitem{Ols76} J. E. Olson. On a combinatorial problem of Erd\H os, Ginzburg, and Ziv. \emph{J. Number Theory}, Vol. 8, No. 1 (1976) 52--57.

\bibitem{OR09} E. Outerelo and J. M. Ruiz. \textit{Mapping degree theory}, Vol. 108, American Mathematical Society (2009).

\bibitem{PS14} M. A. Perles and M. Sigron. Strong general position. arXiv:1409.2899 [math.CO]

\bibitem{Sar00} K. S. Sarkaria. Tverberg partitions and Borsuk--Ulam theorems. \textit{Pacific J. Math.}, Vol. 196, No. 1 (2000) 231--241.

\bibitem{Sim16} S. Simon. Average-value Tverberg Partitions via Finite Fourier Analysis. \textit{Israel J. Math.}, Vol. 216, No. 2 (2016) 891--904. 

\bibitem{Tve66} H. Tverberg. A generalization of Radon's Theorem. \textit{J. London Math. Soc.}, Vol. 41 (1966) 123--128.

\bibitem{Vol96} A. Yu. Volovikov. On a topological generalization of the Tverberg theorem. \textit{Math. Notes}, Vol. 59, No. 3 (1996) 324--326.

\bibitem{VZ11} S. T. Vre\'cica and R. T. \v Zivaljevi\'c. Chessboard complexes indomitable. \emph{J. Combin. Theory Ser. A}, Vol. 118, No. 7 (2011) 2157--2166. 

\bibitem{Za20} D. Zakharov. Convex geometry and the Erd\H os--Ginzburg--Ziv problem. arXiv:2002.09892 [math.CO]

\bibitem{ZV92} R. T. \v Zivaljevi\'c and S. T. Vre\'cica. The colored Tverberg's problem and complexes of injective functions, \emph{J. Combin. Theory Ser. A}, Vol. 61, No. 2 (1992) 309--318. 

\end{thebibliography}
\bibliographystyle{plain}

\end{document}